\renewcommand{\Re}{\mathop{\rm Re}}
\newcommand{\eps}{\ensuremath{\varepsilon}}
\renewcommand{\tilde}{\widetilde}
\newcommand{\bC}{\ensuremath{\mathbb{C}}}
\newcommand{\bE}{\ensuremath{\mathbb{E}}}
\newcommand{\bP}{\ensuremath{\mathbb{P}}}
\newcommand{\bR}{\ensuremath{\mathbb{R}}}
\newcommand{\cE}{\ensuremath{\mathcal{E}}}
\newcommand{\cF}{\ensuremath{\mathcal{F}}}
\newcommand{\cP}{\ensuremath{\mathcal{P}}}
\theoremstyle{plain}
\newtheorem{Thm}{Theorem}[section]
\newtheorem{Lem}[Thm]{Lemma}
\newtheorem{Prop}[Thm]{Proposition}
\newtheorem{Cor}[Thm]{Corollary}
\theoremstyle{definition}
\newtheorem{Rem}[Thm]{Remark}
\numberwithin{equation}{section}
\renewcommand\section{\@startsection {section}{1}{\z@}%
                                   {-3.5ex \@plus -1ex \@minus -.2ex}%
                                   {2.3ex \@plus.2ex}%
                                   {\normalfont\large\bf}}
\renewcommand\subsection{\@startsection {subsection}{1}{\z@}%
                                   {-3.5ex \@plus -1ex \@minus -.2ex}%
                                   {2.3ex \@plus.2ex}%
                                   {\normalfont\normalsize\bf}}
\begin{document}
\begin{center}
	{\Large \bf
		Existence of quasi-stationary distributions for spectrally positive L\'evy processes on the half-line
	}
\end{center}
\begin{center}
	Kosuke Yamato (Kyoto University)
\end{center}
\begin{center}
	{\small \today}
\end{center}

\begin{abstract}
	For spectrally positive L\'evy processes killed on exiting the half-line,
	existence of a quasi-stationary distribution is characterized by the exponential integrability of the exit time, the Laplace exponent and the non-negativity of the scale functions.
	It is proven that if there is a quasi-stationary distribution, there are necessarily infinitely many ones and the set of quasi-stationary distributions is characterized.
	A sufficient condition for the minimal quasi-stationary distribution to be the Yaglom limit is given.
\end{abstract}

%%%%% text %%%%%

\section{Introduction}\label{section:intro}

For a stochastic process $\{ X_{t} \}_{t \geq 0} $ with the lifetime $\zeta$,
an initial distribution $\nu$ is called \textit{quasi-stationary distribution} when the distribution of $X_{t}$ starting from $\nu$ conditioned to survive at time $t$ is invariant in $t$,
that is, the following holds: 
\begin{align}
	\bP_{\nu}[X_{t} \in dx \mid \zeta > t] = \nu(dx) \quad (t > 0), \label{eq25}
\end{align} 
where $\bP_{\nu}$ denotes the underlying probability distribution of $X$ with the initial distribution $\nu$.
In the present paper, we study existence of a quasi-stationary distribution for spectrally positive L\'evy processes, that is, L\'evy processes without negative jumps or monotone paths, when they are killed at the exit time of the half-line.
More precisely, we consider existence of the probability distribution $\nu$ on $[0,\infty)$ satisfying \eqref{eq25} for $X$ being a spectrally positive L\'evy process and the lifetime $\zeta = \inf \{ t > 0 \mid X_{t} < 0 \}$.
We give a necessary and sufficient condition for the existence and characterize the set of quasi-stationary distributions.
Our approach is quite straightforward and mainly uses the \textit{scale function}, which is a fundamental tool in the study of spectrally one-sided L\'evy processes (see, e.g., \cite[Chapter 7]{BertoinLevy}, \cite{LevyMattersII} and \cite[Chapter 8]{KyprianouText}), and the related formulas for the exit time and potential density, which we will recall in Section 2.

% Let $X$ be a real-valued \textit{spectrally positive L\'evy process}, that is, the process $X$ is a L\'evy process without negative jumps or monotone paths.
% Suppose the process $X$ is killed on exiting the half-line $[0,\infty)$ and consider existence of an initial distribution $\nu$ which is invariant in time when conditioned to survive at each time $t > 0$:
% \begin{align}
% 	\bP_{\nu}[X_{t} \in dx \mid \tau^{-}_{0} > t] = \nu(dx) \quad (t > 0). \label{eq25}
% \end{align}
% where $\tau^{-}_{0} := \inf \{ t > 0 \mid X_{t} < 0 \}$ is the exit time of the half-line and
% $\bP_{\nu}$ denotes the underlying probability measure of $X$ with the initial distribution $\nu$.
% We call a probability distribution $\nu$ satisfying \eqref{eq25} \textit{quasi-stationary distribution}.
% In the present paper, we study existence of a quasi-stationary distribution and characterization of the set of quasi-stationary distributions.
% Our main results show that for any spectrally positive L\'evy process, either of the following two cases occurs: (i) there are no quasi-stationary distributions, (ii) there are infinitely many quasi-stationary distributions.
% We also give necessary and sufficient conditions for the existence in terms of the exponential integrability of the exit time $\tau^{-}_{0}$, the Laplace exponent and the scale functions.

On existence of a quasi-stationary distribution for spectrally positive L\'evy processes,
Bertoin \cite{BertoinQSD} has studied the case when killing occurs at the exit time of a finite interval $[0,a] \ (a \in (0,\infty))$ and showed under the assumption of the absolute continuity of the transition probability that there exists a unique quasi-stationary distribution $\nu$ such that
\begin{align}
	\bP_{\nu}[X_{t} \in dx \mid \tau^{(a)} > t] = \nu(dx) \quad (x \in [0,a]), \label{}
\end{align}
where $\tau^{(a)} := \inf \{ t > 0 \mid X_{t} \not\in [0,a] \}$, and it is represented by 
\begin{align}
	\nu(dx) := C W^{(-\lambda_{0})}(x)dx, \label{}
\end{align}
where $W^{(-\lambda_{0})}$ is the $(-\lambda_{0})$-scale function, which will be recalled in Theorem \ref{thm:scaleFunc} and Proposition \ref{prop:extScale},
$\lambda_{0} := \inf \{ \lambda \geq 0 \mid W^{(-\lambda)}(a) = 0 \}$ and a normalization constant $C > 0$.
He also showed the quasi-stationary distribution $\nu$ is the \textit{Yaglom limit}:
\begin{align}
	\bP_{x}[X_{t} \in dy \mid \tau^{(a)} > t] \xrightarrow[t \to \infty]{} \nu(dy) \quad (x \in (0,a)).\label{}
\end{align}
Here and hereafter, the convergence of distributions is in the sense of the weak convergence.
In the proof, he used the fact that the representation of the potential density by the scale function,
which we will recall in Theorem \ref{thm:potentialDensity}, is analytically extended to the meromorphic function on the complex plane and gave an elegant way combining a Tauberian theorem and the $R$-theory for general Markov processes developed by Tuominen and Tweedie \cite{TuominenTweedie}.
Although the analytic extension of the potential density is also important in our analysis,
we cannot appeal to the $R$-theory because in our situation the \textit{$\lambda_{0}$-recurrence}, which is an essential condition to apply the $R$-theory, does not hold as we will see in Remark \ref{rem:extOfPhi}.

Kyprianou and Palmowski \cite{KyprianouPalmowski} have studied existence of a quasi-stationary distribution for L\'evy processes killed on exiting the half-line $[0,\infty)$.
They gave two general classes of L\'evy processes which are not necessarily spectrally one-sided, and showed existence of a quasi-stationary distribution and the Yaglom limit for the processes in the classes.
Though their results are widely applicable,
their assumptions are a little strong when we focus on existence of a quasi-stationary distribution and restrict our attention to spectrally positive L\'evy processes.
In addition, their result only shows existence of the minimal quasi-stationary distribution in a certain stochastic order (for details see Corollary \ref{cor:QSDsOrdered} and the preceding description).
As we will show in Theorem \ref{thm:charOfQSD}, for spectrally positive L\'evy processes, if there is a quasi-stationary distribution, there are necessarily infinitely many ones.

\subsection{Main results}

For a spectrally positive L\'evy process $X$,
we denote the underlying probability measure starting from $x \in \bR$ by $\bP_{x}$ and we simply write $\bP := \bP_{0}$.
The process $X$ is characterized by the Laplace exponent
\begin{align}
	\psi(\beta) &:= \log \bE[\mathrm{e}^{-\beta X_{1}}] \label{} \\
	&= -a\beta + \frac{1}{2}\sigma^{2}\beta^2 + \int_{0}^{\infty}(\mathrm{e}^{-\beta x} - 1 + \beta x 1\{x<1\})\Pi(dx) \quad (\beta \in \bR), \label{LevyKhintchine}
\end{align}
where $a \in \bR$ is a constant, $\sigma \geq 0$ is a Gaussian coefficient and $\Pi$ is a L\'evy measure, that is, a Radon measure on $(0,\infty)$ with $\int_{0}^{\infty}(1 \wedge x^{2})\Pi(dx) < \infty$.
Note that $|\psi(\beta)| < \infty$ for $\beta \geq 0$, while $-\infty < \psi(\beta) \leq \infty$ for $\beta < 0$.
Since the process $X$ is not a subordinator nor a constant drift, we have the following three cases (see e.g., \cite[Lemma 2.14]{KyprianouText}):
\begin{align}
	\begin{aligned}
		\text{(i)} \quad
		&\text{$\sigma > 0$.
		} \\
		\text{(ii)} \quad
		&\text{$ \sigma = 0$\quad and \quad$\int_{0}^{1}x\Pi(dx) = \infty$.
		} \\
		\text{(iii)} \quad 
		&\text{$\sigma = 0$\quad and \quad$a < \int_{0}^{1-} x\Pi(dx) < \infty$.
		}
	\end{aligned}
	\label{eq22}
\end{align}
Define
\begin{align}
	\lambda_{0} := \sup \{ \lambda \geq 0 \mid \bE_{x}[\mathrm{e}^{\lambda \tau^{-}_{0}}, \tau_{0} < \infty] < \infty \ \text{for some } x > 0 \}, \label{eq08}
\end{align}
where $\tau_{0}^{-} := \inf \{ t > 0 \mid X_{t} < 0 \}$.
Note that $\lambda_{0}$ is always finite since $\sigma > 0$ or $\Pi(0,\infty) > 0$.
Also note that, for any $\lambda \geq 0$ by the strong Markov property and the space-homogeneity, if  $\bE_{x}[\mathrm{e}^{\lambda\tau^{-}_{0}}, \tau_{0} < \infty] < \infty$ for some $x > 0$, it also holds for every $x > 0$.

Let us give a necessary and sufficient condition for existence of a quasi-stationary distribution.
The $q$-scale function $W^{(q)}(x) \ (q \in \bC,\ x \in \bR)$ will be recalled in Section \ref{section:preliminaries}.
Note that for $q \in \bR$, the function $W^{(q)}$ is real-valued.
The positivity and integrability of $W^{(-\lambda)} \ (\lambda \in (0,\lambda_{0}])$ will be proven in Lemmas \ref{lem:non-negativityOfScale} and \ref{lem:non-negativeImpliyIntegrable}, respectively.

\begin{Thm} \label{thm:charOfQSD}
	For a spectrally positive L\'evy process $X$, a quasi-stationary distribution exists if and only if $\lambda_{0} > 0$.
	If $\lambda_{0} > 0$, the set of quasi-stationary distributions is $\{ \nu_{\lambda} \}_{0 < \lambda \leq \lambda_{0}}$, where 
	\begin{align}
		\nu_{\lambda}(dx) := \lambda W^{(-\lambda)}(x)dx, \label{}
	\end{align}
	and it holds
	\begin{align}
		\bP_{\nu_{\lambda}}[X_{t} \in dx, \tau_{0} > t] = \mathrm{e}^{-\lambda t}\nu(dx). \label{}
	\end{align}
\end{Thm}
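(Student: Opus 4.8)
The plan is to reduce the quasi-stationary property to a left-eigenmeasure equation for the killed semigroup and then to classify all such eigenmeasures by an explicit Laplace-transform computation built on the $q$-resolvent density of the killed process. Write $P^{0}_{t}f(x) := \bE_{x}[f(X_{t}); \tau^{-}_{0} > t]$ for the sub-Markovian semigroup of $X$ killed at $\tau^{-}_{0}$, and $R^{0}_{q} := \int_{0}^{\infty}\mathrm{e}^{-qt}P^{0}_{t}\,dt$ for its resolvent. First I would observe that if $\nu$ is a quasi-stationary distribution then $g(t) := \bP_{\nu}[\tau^{-}_{0} > t]$ is multiplicative, whence $g(t) = \mathrm{e}^{-\lambda t}$ for some $\lambda \in [0,\infty)$, and \eqref{eq25} upgrades to the eigenmeasure relation $\nu P^{0}_{t} = \mathrm{e}^{-\lambda t}\nu$ for all $t > 0$; integrating in $t$ gives $\nu R^{0}_{q} = (q+\lambda)^{-1}\nu$ for $q > -\lambda$. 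Conversely, by injectivity of the Laplace transform the resolvent identity for all large $q$ recovers the eigenmeasure relation, so the task becomes: determine all probability measures $\nu$ and all $\lambda \geq 0$ with $\nu R^{0}_{q} = (q+\lambda)^{-1}\nu$.

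The computational core rests on the killed resolvent density. Passing to the dual $\hat{X} = -X$, which is spectrally negative with the same Laplace exponent $\psi$, Hunt duality on $(0,\infty)$ together with the classical spectrally negative half-line resolvent yields that $R^{0}_{q}$ has density
\begin{align}
	r_{q}(x,y) = \mathrm{e}^{-\Phi(q)x}W^{(q)}(y) - W^{(q)}(y-x) \qquad (x,y \geq 0), \label{}
\end{align}
where $\Phi(q) := \sup\{\beta : \psi(\beta) = q\}$. Two scale-function identities then do the work: the transform $\int_{0}^{\infty}\mathrm{e}^{-\Phi(q)x}W^{(-\lambda)}(x)\,dx = (\psi(\Phi(q))+\lambda)^{-1} = (q+\lambda)^{-1}$, and the convolution identity $\int_{0}^{y}W^{(-\lambda)}(x)W^{(q)}(y-x)\,dx = (q+\lambda)^{-1}(W^{(q)}(y)-W^{(-\lambda)}(y))$. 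Inserting these and cancelling gives
\begin{align}
	\int_{0}^{\infty}W^{(-\lambda)}(x)\,r_{q}(x,y)\,dx = \frac{1}{q+\lambda}W^{(-\lambda)}(y) \qquad (q > -\lambda). \label{}
\end{align}
Hence $\nu_{\lambda}(dx) = \lambda W^{(-\lambda)}(x)\,dx$ solves the resolvent eigenequation. For $\lambda \in (0,\lambda_{0}]$ it is moreover a probability measure: non-negativity is Lemma \ref{lem:non-negativityOfScale}, integrability is Lemma \ref{lem:non-negativeImpliyIntegrable}, and $\lambda\int_{0}^{\infty}W^{(-\lambda)}(x)\,dx = \lambda(\psi(0)+\lambda)^{-1} = 1$ by letting $\beta\downarrow 0$ in the defining transform. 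Inverting in $t$ shows $\nu_{\lambda}$ is quasi-stationary.

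For the converse and the constraints on $\lambda$ I would run the same computation backwards. Any quasi-stationary $\nu$ with rate $\lambda$ satisfies $\nu R^{0}_{q} = (q+\lambda)^{-1}\nu$; since $\nu R^{0}_{q}$ carries the density $y \mapsto \int \nu(dx)\,r_{q}(x,y)$, the measure $\nu$ is automatically absolutely continuous, say $\nu(dy) = n(y)\,dy$. Writing the eigenequation as $c\,W^{(q)}(y) - (W^{(q)}*n)(y) = (q+\lambda)^{-1}n(y)$ with $c := \int_{0}^{\infty}\mathrm{e}^{-\Phi(q)x}n(x)\,dx$ and $(W^{(q)}*n)(y) := \int_{0}^{y}W^{(q)}(y-x)n(x)\,dx$, taking Laplace transforms in $y$ and solving for $\hat{n}(\beta) := \int_{0}^{\infty}\mathrm{e}^{-\beta y}n(y)\,dy$ forces $\hat{n}(\beta) = c(q+\lambda)(\psi(\beta)+\lambda)^{-1}$; the normalization $\hat{n}(0)=1$ pins down $c(q+\lambda) = \lambda$, hence $\hat{n} = \lambda(\psi+\lambda)^{-1}$ and $\nu = \nu_{\lambda}$. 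The same identity excludes $\lambda = 0$ outright: it would give $\hat{n}(\beta) = cq/\psi(\beta)$, which blows up as $\beta\downarrow 0$ since $\psi(0)=0$, contradicting $\hat{n}(0)=1$. Finally $\lambda \leq \lambda_{0}$, because under $\bP_{\nu}$ the exit time $\tau^{-}_{0}$ is exponential with parameter $\lambda$, so $\bE_{\nu}[\mathrm{e}^{s\tau^{-}_{0}}] < \infty$ for every $s < \lambda$, forcing $s \leq \lambda_{0}$ by \eqref{eq08}. Together these show that a quasi-stationary distribution exists precisely when $\lambda_{0} > 0$ and that the full set is $\{\nu_{\lambda}\}_{0<\lambda\leq\lambda_{0}}$.

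The main obstacle I anticipate is the rigorous justification of the resolvent-density formula in the spectrally positive setting: invoking Hunt duality for the processes killed on the half-line and transferring the spectrally negative half-line resolvent to $X$, while controlling the boundary behaviour at $x=0$. Alongside this sit the Fubini and dominated-convergence arguments needed to interchange the spatial integration against $W^{(-\lambda)}$ (or against $n$) with the Laplace transform in $y$, and to pass from the resolvent identity, valid for $q$ large, back to the eigenmeasure relation for every $t > 0$. Once the two scale-function identities are in hand the algebraic cancellations are forced, so it is precisely this analytic bookkeeping around the interchanges and the half-line duality where the real work lies.
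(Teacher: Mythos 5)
Your proposal is correct and follows essentially the same route as the paper: both directions rest on the killed resolvent density $\mathrm{e}^{-\Phi(q)x}W^{(q)}(y)-W^{(q)}(y-x)$ (the paper's Corollary \ref{cor:potentialDensity}), the Laplace-transform identity \eqref{eq05}, and the convolution identity \eqref{charEq}, with positivity and normalization supplied by Lemmas \ref{lem:non-negativityOfScale} and \ref{lem:non-negativeImpliyIntegrable}, and the bound $\lambda\leq\lambda_{0}$ obtained from the exponential law of $\tau^{-}_{0}$ under $\bP_{\nu}$. The only cosmetic difference is in the uniqueness step, where you solve for $\hat{n}$ by a Laplace transform in $y$ at general $q$ while the paper works at $q=0$ and invokes the uniqueness of solutions to the renewal equation \eqref{charEq} — these are equivalent.
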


\begin{Rem}
	On existence of a quasi-stationary distribution, there is a similarity between spectrally positive L\'evy processes and one-dimensional diffusions.
	Let $Y$ be a $\frac{d}{dm}\frac{d}{ds}$-diffusion $[0,\infty)$ eventually hitting $0$; $\bP_{x}^{Y}[\tau_{0}^{Y} < \infty] = 1 \ (x > 0)$, where $\bP^{Y}_{x}$ denotes the law of $Y$ starting from $x$ and $\tau^{Y}_{y} := \inf \{ t > 0 \mid Y_{t} = y \} \ (y \geq 0)$.
	For $q \in \bC$, let $u = \psi_{q}$ be the unique solution to the following equation:
	\begin{align}
		\frac{d}{dm}\frac{d^{+}}{ds} u(x) = q u(x), \quad u(0) = 0, \quad \frac{d^{+}}{ds}u(0) = 1 \quad (x \geq 0), \label{}
	\end{align}
	where $\frac{d^{+}}{ds}$ denotes the right-differential operator w.r.t.\ the function $s$.
	For $q \geq 0$, set $g_{q}(x) := \bE^{Y}_{x}[\mathrm{e}^{-q\tau^{Y}_{0}}] \ (x \geq 0)$.
	Then $u = g_{q}(x)$ is the unique non-negative decreasing solution to
	\begin{align}
		\frac{d}{dm}\frac{d^{+}}{ds}u(x) = qu(x), \quad u(0) = 1, \quad \lim_{x \to \infty}\frac{d^{+}}{ds}u(x) = 0 \quad (x \geq 0) \label{} 
	\end{align}
	(see \cite[Theorem 5.13.3]{Ito_essentials}).
	The exit time from an interval is characterized by these functions: for $0 \leq b < a$
	\begin{align}
		\bE^{Y}_{x}[\mathrm{e}^{-q\tau_{b}}, \tau_{b} < \tau_{a}] = \frac{\Phi^{(q)}(x,a)}{\Phi^{(q)}(b,a)} \quad (x \in [b,a], q \geq 0), \label{}
	\end{align}
	where
	\begin{align}
		\Phi^{(q)}(x,y) := \psi_{q}(y)g_{q}(x) - g_{q}(y)\psi_{q}(x) \label{}
	\end{align}
	(see \cite[Theorem 5.15.1]{Ito_essentials}).
	As we will recall in Theorem \ref{thm:scaleFunc},
	the similar representation for the exit time holds for spectrally positive L\'evy processes by replacing $\Phi^{(q)}$ with $W^{(q)}$ though this similarity is classically known.
	Note that $u = \Phi^{(q)}(\cdot,y) \ (y > 0)$ is a unique non-negative decreasing solution to
	\begin{align}
		\frac{d}{dm}\frac{d^{+}}{ds}u(x) = qu(x), \quad u(0) = \psi_{q}(y), \quad u(y) = 0 \quad (x \in [0,y]). \label{}
	\end{align}
	In particular, it holds $\Phi^{(q)}(0,y) = \psi_{q}(y)$.

	It is known that for $a > 0$ there exists a unique quasi-stationary distribution $\nu$ when $Y$ is killed at $\tau_{0} \wedge \tau_{a}$ (see e.g., \cite[Theorem 3.1, Example 3.1]{Takeda:QSD}) and
	it is given by
	\begin{align}
		\nu(dx) = C \psi_{-\lambda_{0}^{Y}(a)}(x)dm(x) \quad (x \in [0,a]), \label{}
	\end{align}
	where $\lambda_{0}^{Y}(a) := \inf \{\lambda \geq 0 \mid \psi_{-\lambda}(a) = 0\}$ and $C > 0$ is the normalizing constant.
	This result is similar to the result by Bertoin \cite{BertoinQSD} we have already seen.
	Next we consider the case $Y$ is killed at $\tau_{0}$.	
	It is known that there exists infinitely many quasi-stationary distributions if and only if the boundary $\infty$ is natural in the sense of Feller (see e.g., \cite[Section 5.11]{Ito_essentials}) and
	\begin{align}
		\lambda_{0}^{Y} := \sup \{ \lambda \geq 0 \mid \bE_{x}[\mathrm{e}^{\lambda \tau_{0}^{Y}}] < \infty \quad \text{for some $x > 0$} \} > 0 \label{}
	\end{align}
	(see e.g., \cite[Theorem 6.34]{Quasi-stationary_distributions}).
	In the case, the set of quasi-stationary distributions is given by $\{ \nu_{\lambda}^{Y} \}_{\lambda \in (0,\lambda_{0}^{Y}]}$ for
	\begin{align}
		\nu^{Y}_{\lambda}(dx) := \lambda \psi_{-\lambda}(x)dm(x) \quad (x \in [0,\infty)). \label{}
	\end{align}
	This result is similar to Theorem \ref{thm:charOfQSD}.
\end{Rem}

\begin{Rem}
	As we will see in Corollary \ref{cor:QSDsOrdered}, quasi-stationary distributions are totally ordered by two stochastic orders both related to the Laplace transform and the distribution $\nu_{\lambda_{0}}$ is the minimal element in the orders.
\end{Rem}

Let us characterize $\lambda_{0}$ in two ways: one is in terms of the scale functions and the other is in terms of the minimum value of the Laplace exponent.
Define
\begin{align}
	\theta_{0} := \sup \{ \theta \geq 0 \mid \psi(-\theta) \in \bR  \quad \text{and} \quad \psi^{+}(-\theta) \geq 0 \}, \label{thetaZero}
\end{align}
where we regard $\sup \emptyset = 0$ and $\psi^{+}$ denotes the right-derivative of $\psi$.
Note that $\theta_{0}$ is always finite.
Indeed, if $|\psi(-\theta)| < \infty$ for every $\theta > 0$, it follows $\lim_{\theta \to \infty}\psi(-\theta) = \infty$ from \eqref{eq22}, and since $\psi(\beta)$ is convex on $(-\infty,0)$ in the case, it follows $\psi'(-\theta) = 0$ for some $\theta \geq 0$.
Also note that $\psi(-\theta_{0}) \in \bR$ from \eqref{LevyKhintchine} and the monotone convergence theorem. 

\begin{Thm} \label{thm:spectralBottom}
	The following equalities hold:
	\begin{align}
		\lambda_{0} = \sup \left\{ \lambda \geq 0 ~\middle|~ W^{(-\lambda)}(x) > 0 \quad \text{for every $x > 0$} \right\} = -\psi(-\theta_{0}). \label{eq07}
	\end{align}
\end{Thm}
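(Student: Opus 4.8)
The plan is to prove the two equalities separately, since each links the quantity $-\psi(-\theta_{0})$ to one of the other two. I begin with the structural facts contained in the first assertion. Writing $\phi(\theta):=\psi(-\theta)$ for $\theta\geq0$, the function $\phi$ is convex on $\{\theta\geq0 : \phi(\theta)<\infty\}$ with $\phi(0)=0$, and $\phi'(\theta)=-\psi'(-\theta)$, so the defining condition $\psi'(-\theta)\geq0$ is exactly $\phi'(\theta)\leq0$. Hence on the set entering the definition of $\theta_{0}$ the function $\phi$ is non-increasing, so $\phi(\theta)\leq\phi(0)=0$ there. Since $\phi$ is lower semicontinuous (monotone convergence applied to the exponential integral in the L\'evy--Khintchine form), this bound passes to the supremum: $\phi(\theta_{0})=\lim_{\theta\uparrow\theta_{0}}\phi(\theta)\leq0$. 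This yields simultaneously $|\psi(-\theta_{0})|<\infty$ and $-\psi(-\theta_{0})\geq0$, so the candidate value is a well-defined nonnegative number, and $\psi(-\theta_{0})=\min_{\theta\geq0}\psi(-\theta)$ is the minimum of $\psi$ over $(-\infty,0]$.

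For the equality $\lambda_{0}=-\psi(-\theta_{0})$ I would exploit the absence of negative jumps. Because $X$ creeps downward, $X_{\tau^{-}_{0}}=0$ on $\{\tau^{-}_{0}<\infty\}$, and by spatial homogeneity together with the strong Markov property the map $g(x):=\bE_{x}[\mathrm{e}^{\lambda\tau^{-}_{0}}]$ is multiplicative, $g(x+y)=g(x)g(y)$; being monotone in $x$, it is therefore either $\equiv+\infty$ or of the form $g(x)=\mathrm{e}^{\theta x}$ with $\theta\geq0$. Substituting $\mathrm{e}^{\theta x}$ into the exponential martingale $\mathrm{e}^{\theta X_{t}-\psi(-\theta)t}$ and optionally stopping at $\tau^{-}_{0}$ pins the exponent through $\psi(-\theta)=-\lambda$. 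Thus $g$ is finite precisely when $\psi(-\theta)=-\lambda$ has a root $\theta\geq0$, i.e.\ when $-\lambda\geq\min_{\theta\geq0}\psi(-\theta)=\psi(-\theta_{0})$, which is exactly $\lambda\leq-\psi(-\theta_{0})$; beyond that no real exponent exists and $g\equiv+\infty$. Equivalently, I would run this through the Esscher transform by $-\theta$: the tilted process has Laplace exponent $\psi(\,\cdot-\theta)-\psi(-\theta)$ and mean $-\psi'(-\theta)$, so the constraint $\psi'(-\theta)\geq0$ that defines $\theta_{0}$ says precisely that the tilted process does not drift to $+\infty$, which is what renders $\tau^{-}_{0}$ exponentially integrable at rate $-\psi(-\theta)$.

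For the equality with the scale-function threshold I would read the sign of $W^{(-\lambda)}$ off the singularities of its Laplace transform $\int_{0}^{\infty}\mathrm{e}^{-\beta x}W^{(-\lambda)}(x)\,dx=1/(\psi(\beta)+\lambda)$. For the easy inclusion, when $\lambda\leq-\psi(-\theta_{0})$ there is a real $\theta\in[0,\theta_{0}]$ with $\psi(-\theta)=-\lambda$, and the Esscher factorisation $W^{(-\lambda)}(x)=\mathrm{e}^{-\theta x}W_{-\theta}(x)$ writes $W^{(-\lambda)}$ as a positive exponential times the $0$-scale function of a genuine spectrally one-sided L\'evy process; since $0$-scale functions are nonnegative, $\inf_{x\geq0}W^{(-\lambda)}(x)\geq0$, so the threshold is at least $-\psi(-\theta_{0})$. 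For the reverse inclusion one must show that once $\lambda>-\psi(-\theta_{0})$ the function $W^{(-\lambda)}$ actually leaves the nonnegative orthant: the relevant roots of $\psi(\beta)+\lambda=0$ cease to be real, and inverting the transform through the dominant conjugate pair produces a damped oscillation $W^{(-\lambda)}(x)\sim c\,\mathrm{e}^{(\Re\theta)x}\cos((\Im\theta)x+\varphi)$ whose cosine factor forces a sign change, giving $\inf_{x\geq0}W^{(-\lambda)}(x)<0$. The two inclusions together would yield the middle equality.

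I expect the decisive difficulty to be exactly this converse: passing from ``no real root'' to a genuine negative value of $W^{(-\lambda)}$ requires control of the inverse Laplace transform uniformly in $x$, not merely of its leading order, so one must rule out that the oscillation remains nonnegative. The cleanest route is to couple the asymptotic expansion above with a contour/Tauberian estimate, or alternatively to argue by contradiction through Theorem~\ref{thm:charOfQSD}: if $W^{(-\lambda)}\geq0$ for some $\lambda>-\psi(-\theta_{0})=\lambda_{0}$, then together with its integrability the measure $\lambda W^{(-\lambda)}(x)\,dx$ would be a quasi-stationary distribution of decay rate exceeding $\lambda_{0}$, contradicting the identification of $\lambda_{0}$ in the first equality. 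A secondary but real subtlety is the endpoint $\lambda=\lambda_{0}$: there $\theta_{0}$ is a double root of $\psi(-\,\cdot)+\lambda$ (as $\psi'(-\theta_{0})=0$ in the generic case), the factorisation degenerates, and one must verify separately both that $W^{(-\lambda_{0})}\geq0$ and that $\bE_{x}[\mathrm{e}^{\lambda_{0}\tau^{-}_{0}}]<\infty$, matching the closed endpoint $\lambda\le\lambda_0$ in Theorem~\ref{thm:charOfQSD}.
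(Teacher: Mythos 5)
Your architecture is sound, and several pieces match the paper: the finiteness and non-positivity of $\psi(-\theta_{0})$ via convexity and monotone convergence, the Esscher factorisation $W^{(\psi(-\theta))}(x)=\mathrm{e}^{-\theta x}W_{\theta}(x)$ giving non-negativity of $W^{(-\lambda)}$ for $\lambda\le-\psi(-\theta_{0})$, and your \emph{fallback} for the converse scale-function inclusion (non-negativity of $W^{(-\lambda)}$ forces integrability and then $\lambda W^{(-\lambda)}(x)dx$ is a quasi-stationary distribution with rate $\lambda$, whence $\lambda\le\lambda_{0}$) is exactly the paper's argument via Lemmas \ref{lem:non-negativeImpliyIntegrable} and \ref{lem:scaleFuncInduceQSD}. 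Your \emph{primary} proposal for that converse --- reading a damped oscillation off complex roots of $\psi(\beta)+\lambda$ --- should be discarded: for a general L\'evy measure the singularities of $1/(\psi(\beta)+\lambda)$ need not be isolated poles and the inverse transform need not admit such an expansion, so, as you suspect, it cannot be controlled uniformly in $x$; the paper never attempts it.

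The genuine gap is in the direction $\lambda_{0}\le-\psi(-\theta_{0})$. You say that substituting $g(x)=\mathrm{e}^{\theta x}$ into the martingale $\mathrm{e}^{\theta X_{t}-\psi(-\theta)t}$ and optionally stopping at $\tau^{-}_{0}$ ``pins'' $\psi(-\theta)=-\lambda$. To write down that martingale you must already know $\bE[\mathrm{e}^{\theta X_{1}}]<\infty$, i.e.\ $\int_{1}^{\infty}\mathrm{e}^{\theta x}\Pi(dx)<\infty$, and the hypothesis $\bE_{x}[\mathrm{e}^{\lambda\tau^{-}_{0}}]<\infty$ does not hand you this: deducing exponential moments of $X$ from exponential integrability of the exit time is precisely the content of the paper's Proposition \ref{prop:ExponentialIntegrabilityOfFTImpliesFinitenessOfMGF}, which requires Lemma \ref{lem:propOfWPhi} and a potential-density computation. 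You also leave the uniform integrability needed for optional stopping unaddressed, which is exactly where the restriction $\psi'(-\theta)\ge0$ enters. The gap is repairable along your own lines by running the martingale the other way: $M_{t}:=\bE_{x}[\mathrm{e}^{\lambda\tau^{-}_{0}}\mid\cF_{t}]=\mathrm{e}^{\lambda(t\wedge\tau^{-}_{0})}g(X_{t\wedge\tau^{-}_{0}})$ is a closed non-negative martingale, so $\bE_{x}[\mathrm{e}^{\theta X_{t}},\tau^{-}_{0}>t]\le\mathrm{e}^{\theta x-\lambda t}$; by spatial homogeneity $\bE[\mathrm{e}^{\theta X_{t}},\tau^{-}_{-x}>t]\le\mathrm{e}^{-\lambda t}$, and letting $x\to\infty$ with monotone convergence yields $\mathrm{e}^{t\psi(-\theta)}=\bE[\mathrm{e}^{\theta X_{t}}]\le\mathrm{e}^{-\lambda t}$, hence $\psi(-\theta_{0})\le\psi(-\theta)\le-\lambda$. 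With that repair your route to the second equality is genuinely different from --- and more probabilistic than --- the paper's, which instead extends $\Phi$ analytically to $(-\lambda_{0},\infty)$, proves Proposition \ref{prop:ExponentialIntegrabilityOfFTImpliesFinitenessOfMGF}, and concludes by the concavity of $\Phi$; as written, however, the step is not proved.
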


As a consequence of Theorem \ref{thm:spectralBottom}, we have the following corollary,
which gives a simple criterion for existence of quasi-stationary distributions.
From this corollary we see that the Gaussian coefficient does not affect existence of quasi-stationary distributions as long as \eqref{eq22} is satisfied.

\begin{Cor}
	For a spectrally positive L\'evy process $X$,
	a quasi-stationary distribution exists if and only if the following holds:
	\begin{align}
		\psi^{+}(0) > 0 \quad \text{and} \quad \int_{1}^{\infty}\mathrm{e}^{\delta x}\Pi(dx) < \infty \quad \text{for some } \delta > 0. \label{}
	\end{align}
\end{Cor}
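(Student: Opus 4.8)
The plan is to read the statement off Theorems \ref{thm:charOfQSD} and \ref{thm:spectralBottom} together with the convexity of $\psi$. By Theorem \ref{thm:charOfQSD} a quasi-stationary distribution exists if and only if $\lambda_{0} > 0$, and by Theorem \ref{thm:spectralBottom} we have $\lambda_{0} = -\psi(-\theta_{0})$ with $|\psi(-\theta_{0})| < \infty$. Hence it suffices to show that $-\psi(-\theta_{0}) > 0$ is equivalent to the stated pair of conditions. To organize this I would work with the function $f(\theta) := \psi(-\theta)$ on its domain of finiteness $\{ \theta \geq 0 \mid \psi(-\theta) \in \bR \}$, which is an interval containing $0$ with $f(0) = \psi(0) = 0$. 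Here $f$ is convex because $\psi$ is convex on $(-\infty,0)$, and $\psi'(-\theta) = -f'(\theta)$, so the defining condition $\psi'(-\theta) \geq 0$ in \eqref{thetaZero} is exactly $f'(\theta) \leq 0$; consequently $\theta_{0} = \sup \{ \theta \geq 0 \mid f(\theta) < \infty,\ f'(\theta) \leq 0 \}$.

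Two elementary facts drive the argument. First, splitting the L\'evy integral in \eqref{LevyKhintchine} at $x = 1$ and using $\mathrm{e}^{\theta x} - 1 - \theta x = O(x^{2})$ as $x \to 0$ together with $\int_{0}^{1} x^{2}\Pi(dx) < \infty$, the finiteness $f(\theta) < \infty$ is equivalent to $\int_{1}^{\infty}\mathrm{e}^{\theta x}\Pi(dx) < \infty$; thus the exponential tail condition holds for some $\delta > 0$ if and only if $f$ is finite on a nondegenerate interval $[0,\delta]$. Second, differentiating \eqref{LevyKhintchine} gives $\psi'(0) = -a - \int_{1}^{\infty}x\Pi(dx) = -f'(0+)$, so that $\psi'(0) > 0$ is precisely $f'(0+) < 0$ (with the convention $\psi'(0) = -\infty$, equivalently $f'(0+) = +\infty$, when $\int_{1}^{\infty}x\Pi(dx) = \infty$).

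With these in hand the two directions are short. For the forward direction, suppose $\lambda_{0} > 0$, i.e.\ $f(\theta_{0}) < 0$. Since $f(0) = 0$ we must have $\theta_{0} > 0$, and Theorem \ref{thm:spectralBottom} guarantees $f(\theta_{0})$ is finite; finiteness at $\theta_{0} > 0$ yields the exponential tail bound with $\delta = \theta_{0}$, while convexity with $f(\theta_{0}) < f(0)$ gives $f'(0+) \leq f(\theta_{0})/\theta_{0} < 0$, that is $\psi'(0) > 0$. Conversely, if $\psi'(0) > 0$ and the tail bound holds for some $\delta > 0$, then $f'(0+) < 0$ and $f$ is finite on $[0,\delta]$, so $f$ strictly decreases from $f(0) = 0$ on an initial interval and takes negative values; since $f' \leq 0$ there we get $\theta_{0} > 0$, and for any small $\eps \in (0,\theta_{0}]$ monotonicity gives $f(\theta_{0}) \leq f(\eps) < 0$, whence $\lambda_{0} = -f(\theta_{0}) > 0$.

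I do not expect a serious obstacle here: the corollary is essentially a repackaging of the two theorems. The only points requiring care are the boundary behaviour and the degenerate cases. One must ensure that $\theta_{0}$ lies where $\psi(-\theta_{0})$ is genuinely finite, which is exactly the first assertion of Theorem \ref{thm:spectralBottom} and so may be invoked directly rather than re-proved. One should also check the two excluded regimes explicitly, namely $\psi'(0) \leq 0$ and $\int_{1}^{\infty}\mathrm{e}^{\delta x}\Pi(dx) = \infty$ for every $\delta > 0$: in the former $f'(0+) \geq 0$ forces $\theta_{0} = 0$, and in the latter the finiteness interval of $f$ degenerates to $\{0\}$ so again $\theta_{0} = 0$; in both cases $\lambda_{0} = -f(0) = 0$, consistent with the nonexistence of a quasi-stationary distribution.
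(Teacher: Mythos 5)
Your proposal is correct and follows essentially the same route as the paper: the paper's own proof simply combines Theorems \ref{thm:charOfQSD} and \ref{thm:spectralBottom} to reduce existence to the positivity of $\theta_{0}$, and then invokes the standard fact that $|\psi(-\beta)|<\infty$ is equivalent to $\int_{1}^{\infty}\mathrm{e}^{\beta x}\Pi(dx)<\infty$. The only difference is that you spell out the convexity argument identifying $\theta_{0}>0$ with the pair of stated conditions, which the paper leaves implicit; this is a harmless (and arguably welcome) elaboration, not a different method.
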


\begin{proof}
	From Theorem \ref{thm:charOfQSD} and Theorem \ref{thm:spectralBottom}, existence of quasi-stationary distributions is equivalent to the positivity of $\theta_{0}$.
	It is well-known that the finiteness $|\psi(-\beta)|$ is equivalent to that of $\int_{1}^{\infty}\mathrm{e}^{\beta x}\Pi(dx)$ (see e.g., \cite[Theorem 3.6]{KyprianouText}). 
\end{proof}

We give a sufficient condition for the minimal quasi-stationary distribution $\nu_{\lambda_{0}}$ to be the Yaglom limit of every compactly supported initial distribution $\mu$ on $(0,\infty)$, that is, the following holds:
\begin{align}
	\bP_{\mu}[X_{t} \in dx \mid \tau^{-}_{0} > t] \xrightarrow[t \to \infty]{} \nu_{\lambda_{0}}(dx). \label{eq28}
\end{align}
Before stating the result, we recall the \textit{Esscher transform}, the exponential change of the measure 
\begin{align}
	\left.\frac{d\bP^{\theta}}{d\bP}\right|_{\cF_{t}} = \cE_{t}, \label{eq16}
\end{align}
by the exponential martingale $\cE_{t} := \mathrm{e}^{\theta X_{t} - \psi(-\theta)t}$ for $\theta \in \bR$ such that $\psi(-\theta) \in \bR$, where $\{\cF_{t}\}_{t \geq 0}$ is the natural filtration of $X$.
It is well-known that under $\bP^{\theta}$, the process $X$ is again a spectrally positive L\'evy process with the Laplace exponent $\psi_{\theta}(\beta) := \psi(\beta - \theta) - \psi(-\theta) \ (\beta \geq 0)$ (see e.g., \cite[p.82]{KyprianouText}).
The following corollary is just an application of \cite[Theorem 1]{KyprianouPalmowski}. Note that the assumption $X_{1}$ is non-lattice is not restrictive since every quasi-stationary distribution is absolutely continuous.

\begin{Cor} \label{cor:YaglomLimit}
	Let $\lambda_{0} > 0$.
	Suppose $X_{1}$ is non-lattice and either of the following conditions holds:
	\begin{enumerate}
		\item Under $\bP^{\theta_{0}}$, the process $X$ is in the domain of attraction of an $\alpha$-stable distribution for $1 < \alpha \leq 2$.
		\item The function $x \mapsto \int_{x}^{\infty}\mathrm{e}^{\theta_{0}y}\Pi(dy)$ is regularly varying at $\infty$ with exponent $-\beta$ for $\beta > 2$.
	\end{enumerate}
	Then the convergence \eqref{eq28} holds for every compactly supported distribution $\mu$ on $(0,\infty)$.
\end{Cor}

\begin{Rem}
	If $\psi(-\theta_{0} - \delta) \in \bR$ for some $\delta > 0$, the condition (i) holds.
	Indeed, it implies $\int_{0}^{\infty}x^{2}\mathrm{e}^{\theta_{0}x}\Pi(dx) < \infty$ and hence $\bE^{\theta_{0}}|X_{1}|^{2} < \infty$. By the central limit theorem, we see that $X$ satisfies the condition (i) for $\alpha = 2$.

\end{Rem}

\subsection*{Outline of the paper}

% In Section \ref{section:prevStudies}, we briefly review the previous studies related to the quasi-stationary distributions for spectrally positive L\'evy processes.
In Section \ref{section:preliminaries}, we will prepare notation and recall some known results.
In Section \ref{section:proofs}, we will prove Theorems \ref{thm:charOfQSD}, \ref{thm:spectralBottom} and Corollary \ref{cor:YaglomLimit}, showing auxiliary results.
In Section \ref{section:example}, we will show some examples for which we can compute the scale functions explicitly to some extent and give the quasi-stationary distributions.

\subsection*{Acknowledgement}

The author would like to thank Kouji Yano who read an early draft of the present paper and gave him valuable comments.
This work was supported by JSPS KAKENHI Grant Number JP21J11000, JSPS Open Partnership Joint Research
Projects Grant Number JPJSBP120209921 and the Research Institute for Mathematical Sciences,
an International Joint Usage/Research Center located in Kyoto University and, was carried out under the ISM Cooperative Research Program (2020-ISMCRP-5013).

\section{Preliminaries} \label{section:preliminaries}

Here we recall some basic properties for spectrally positive L\'evy processes.
Most results in this section can be seen in \cite[Chapter 8]{KyprianouText} and \cite[Chapter 7]{BertoinLevy}, and thus the proof is omitted in most cases.
In the sequel, the process $X$ always denotes a spectrally positive L\'evy process.

\subsection{Exit time}

Since it holds $\psi''(\beta) = \sigma^{2} + \int_{0}^{\infty}x^{2}\mathrm{e}^{-\beta x}\Pi(dx) > 0$,
the Laplace exponent $\psi(\beta)$ is strictly convex and it holds $\bE X_{1} = - \psi^{+}(0) \in (-\infty,\infty]$.
% At first, we consider when the process $X$ certainly exit the half-line.
% \begin{Prop}
% 	The Laplace exponent $\psi$ is strictly convex and 
% 	\begin{align}
% 		\bE X_{1} = - \psi'(0+) \in (-\infty,\infty]. \label{}
% 	\end{align}
% \end{Prop}
% \begin{proof}
% 	Strict convexity follows from
% 	\begin{align}
% 		\psi''(\beta) = \sigma^{2} + \int_{0}^{\infty}x^{2}\mathrm{e}^{-\beta x}\Pi(dx) > 0. \label{}
% 	\end{align}
% 	Since $\psi'(\beta)$ is strictly increasing, we have
% 	\begin{align}
% 		-\bE X_{1} = - a - \int_{1}^{\infty}x \Pi(dx) = \psi'(0+) \in [-\infty,\infty). \label{}
% 	\end{align}
% \end{proof}
Define the right inverse of $\psi$:
\begin{align}
	\Phi(q) := \sup \{ \beta \geq 0 \mid \psi(\beta) = q \} \quad (q \geq 0).
\end{align}
Since $X$ is not a subordinator, it holds $\psi(\infty) = \infty$.
Indeed, if $\sigma > 0$, it is obvious and if $\sigma = 0$, by Fatou's lemma $\liminf_{\beta \to \infty} \psi(\beta) / \beta \geq -a + \int_{0}^{1-} x \Pi(dx) > 0$.
From this, we also see $\Phi(\infty) = \infty$.
The following basic formula plays an important role in our analysis.

\begin{Thm}[{\cite[Theorem 3.12]{KyprianouText}}] \label{thm:LTofFHT}
	It holds
	\begin{align}
		\bE_{x}[\mathrm{e}^{-q \tau^{-}_{0}}, \tau_{0}^{-} < \infty ] = \mathrm{e}^{-x \Phi(q)} \quad (x \geq 0, \ q \geq 0). \label{eq12}
	\end{align}
\end{Thm}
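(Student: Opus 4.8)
The plan is to exploit the exponential change-of-measure martingale attached to the Laplace exponent. For fixed $q \geq 0$, set $\beta = \Phi(q)$, so that $\psi(\beta) = q$ and $\beta \geq 0$ lies in the region where $\psi$ is finite. Since $X$ has independent and stationary increments with $\bE[\mathrm{e}^{-\beta X_{t}}] = \mathrm{e}^{\psi(\beta)t} = \mathrm{e}^{qt}$, the process
\[
	M_{t} := \mathrm{e}^{-\Phi(q) X_{t} - q t} \quad (t \geq 0)
\]
is a $\bP_{x}$-martingale with $M_{0} = \mathrm{e}^{-\Phi(q) x}$. First I would record this martingale property and then note that, because $X$ has no negative jumps, the process cannot jump strictly below $0$: it exits $[0,\infty)$ downward only by passing through $0$ continuously, so that $X_{\tau^{-}_{0}} = 0$ on $\{ \tau^{-}_{0} < \infty \}$ (the creeping-downward property, dual to the upward creeping of spectrally negative processes).

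Next I would apply the optional stopping theorem to $M$ at the bounded stopping time $t \wedge \tau^{-}_{0}$. Since $X_{s} \geq 0$ for $s < \tau^{-}_{0}$ and $X_{\tau^{-}_{0}} = 0$ on $\{\tau^{-}_{0} < \infty\}$, we have $X_{t \wedge \tau^{-}_{0}} \geq 0$, hence $0 < M_{t \wedge \tau^{-}_{0}} \leq 1$. Optional stopping then yields
\[
	\mathrm{e}^{-\Phi(q) x} = \bE_{x}[M_{t \wedge \tau^{-}_{0}}] = \bE_{x}[\mathrm{e}^{-q \tau^{-}_{0}}, \tau^{-}_{0} \leq t] + \bE_{x}[\mathrm{e}^{-\Phi(q) X_{t} - q t}, \tau^{-}_{0} > t],
\]
where on $\{\tau^{-}_{0} \leq t\}$ I have used $t \wedge \tau^{-}_{0} = \tau^{-}_{0}$ together with $X_{\tau^{-}_{0}} = 0$ to reduce the stopped value to $\mathrm{e}^{-q\tau^{-}_{0}}$.

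Finally I would let $t \to \infty$. The first term increases to $\bE_{x}[\mathrm{e}^{-q\tau^{-}_{0}}, \tau^{-}_{0} < \infty]$ by monotone convergence, so it remains to show the residual term vanishes. When $q > 0$ this is immediate: on $\{\tau^{-}_{0} > t\}$ one has $X_{t} \geq 0$, so the integrand is bounded by $\mathrm{e}^{-qt} \to 0$, and \eqref{eq12} follows. The case $q = 0$ is more delicate and is where the drift of $X$ enters; I expect it to be the main obstacle, since the martingale identity degenerates when $\Phi(0) = 0$ and one must instead argue via fluctuation theory. Here $\Phi(0) = \sup\{\beta \geq 0 \mid \psi(\beta) = 0\}$ is governed by $\bE X_{1} = -\psi^{+}(0)$: if $\bE X_{1} \leq 0$ then $\Phi(0) = 0$ and $\liminf_{t} X_{t} = -\infty$, so $\tau^{-}_{0} < \infty$ almost surely, matching $\mathrm{e}^{-\Phi(0)x} = 1$; if $\bE X_{1} > 0$ then $\Phi(0) > 0$, the strong law forces $X_{t} \to +\infty$ on $\{\tau^{-}_{0} = \infty\}$, and bounded convergence (the integrand is at most $1$ and tends to $0$ there) annihilates the residual term, leaving $\bP_{x}(\tau^{-}_{0} < \infty) = \mathrm{e}^{-\Phi(0)x} < 1$. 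Collecting the two contributions in each case establishes \eqref{eq12}.
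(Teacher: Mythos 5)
This theorem is imported verbatim from \cite[Theorem 3.12]{KyprianouText}; the paper offers no proof of its own. Your argument --- the Wald martingale $\mathrm{e}^{-\Phi(q)X_{t}-qt}$, optional stopping at the bounded time $t\wedge\tau^{-}_{0}$, downward creeping giving $X_{\tau^{-}_{0}}=0$ on $\{\tau^{-}_{0}<\infty\}$, and the separate treatment of $q=0$ according to the sign of $\bE X_{1}$ --- is precisely the standard proof found in that reference, and every step is sound.
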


From Theorem \ref{thm:LTofFHT}, we see the following equivalence for $x > 0$:
\begin{align}
	\bP_{x}[\tau^{-}_{0} < \infty] = 1 \quad \Leftrightarrow \quad \Phi(0) = 0 \quad \Leftrightarrow \quad \psi^{+}(0) \geq 0 \quad \Leftrightarrow \quad \bE X_{1} \leq 0. \label{eq31} 
\end{align}

\begin{Rem} \label{rem:FHU}
	From Theorem \ref{thm:LTofFHT}, we see the map $\cP[0,\infty) \ni \mu \mapsto \bP_{\mu}[\tau^{-}_{0} \in dt]$ is injective, where $\cP[0,\infty)$ denotes the set of probability distributions on $[0,\infty)$.
	Indeed, for every $\mu \in \cP[0,\infty)$, it holds
	\begin{align}
		\bE_{\mu}[\mathrm{e}^{-q\tau^{-}_{0}}, \tau_{0}^{-} < \infty] = \int \mathrm{e}^{-x\Phi(q)}\mu(dx), \label{}
	\end{align}
	and the RHS is the Laplace transform of $\mu$ evaluated at $\Phi(q)$.
	Since $\Phi$ is continuous and strictly increasing, the distribution $\mu$ is determined by the distribution $\bP_{\mu}[\tau^{-}_{0} \in dt]$.
	In \cite{QSD_unifying_approach} we called the injectivity the \textit{first hitting uniqueness}.
	By \cite[Theorem 1.1]{QSD_unifying_approach} (while it is shown for one-dimensional diffusions, the same proof works for much more general Markov processes),
	we see the convergence $\bP_{\mu}[X_{t} \in dx \mid \tau^{-}_{0} > t] \to \nu(dx) \ (t \to \infty)$ is reduced to the tail behavior of $\bP_{\mu}[\tau^{-}_{0} \in dt]$.
\end{Rem}

\subsection{Scale functions}

Define the first hitting times:
\begin{align}
	\tau^{+}_{x} := \inf \{ t > 0 \mid X_{t} > x \}, \quad
	\tau^{-}_{x} := \inf \{ t > 0 \mid X_{t} < x \} \quad (x \in \bR). 
\end{align}
The following gives the definition and properties of the $q$-scale function $W^{(q)}$ for $q \geq 0$.
\begin{Thm}[{\cite[Theorem 8.1]{KyprianouText}}] \label{thm:scaleFunc}
	For $q \geq 0$, there exists a function $W^{(q)}: \bR \to [0,\infty)$ (we write $W := W^{(0)}$) such that the following hold:
	\begin{enumerate}
		\item For $x < 0$, the function $W^{(q)}(x) = 0$, and $W^{(q)}$ is characterized on $[0,\infty)$ as a strictly increasing and continuous function whose Laplace transform satisfies
		\begin{align}
			\int_{0}^{\infty} \mathrm{e}^{-\beta x} W^{(q)}(x)dx = \frac{1}{\psi(\beta) - q} \quad (\beta > \Phi(q)). \label{}
		\end{align}
		\item For $0 \leq x \leq a$, it holds
		\begin{align}
			\bE_{x}[\mathrm{e}^{-q \tau^{-}_{0}}, \tau^{-}_{0} < \tau^{+}_{a} ] = \frac{W^{(q)}(a-x)}{W^{(q)}(a)}. \label{eq01}
		\end{align}
	\end{enumerate}
\end{Thm}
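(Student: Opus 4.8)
The plan is to reduce both claims to the case $q=0$ of a process drifting to $-\infty$ by an exponential change of measure, and then to realize $W^{(0)}$ through the law of the all-time supremum. For $c \ge 0$ the process $\mathrm{e}^{-c(X_{t}-X_{0})-\psi(c)t}$ is a mean-one martingale, since $\bE[\mathrm{e}^{-cX_{1}}]=\mathrm{e}^{\psi(c)}$, and the induced change of measure $\bP^{c}_{x}$ again yields a spectrally positive process, now with Laplace exponent $\psi_{c}(\beta)=\psi(\beta+c)-\psi(c)$. Taking $c=\Phi(q)$ gives $\psi_{\Phi(q)}(0)=0$ and right-derivative $\psi^{+}_{\Phi(q)}(0)=\psi^{+}(\Phi(q))$, which is strictly positive for every $q>0$ by strict convexity; thus under $\bP^{\Phi(q)}$ the process drifts to $-\infty$. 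If $W_{\Phi(q)}$ denotes its $0$-scale function, substituting $\beta\mapsto\beta-\Phi(q)$ in the Laplace transform shows that $W^{(q)}(x):=\mathrm{e}^{\Phi(q)x}W_{\Phi(q)}(x)$ has transform $1/(\psi(\beta)-q)$ on $\beta>\Phi(q)$ and inherits positivity, monotonicity and continuity from $W_{\Phi(q)}$. It therefore suffices to construct $W^{(0)}$, with the stated regularity, for a spectrally positive process with $\psi^{+}(0)>0$.

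For such a process $\bar X_{\infty}:=\sup_{t\ge0}X_{t}$ is finite almost surely, and the Wiener--Hopf factorization yields $\bE[\mathrm{e}^{-\beta\bar X_{\infty}}]=\psi^{+}(0)\beta/\psi(\beta)$. A Fubini computation turns this into $\int_{0}^{\infty}\mathrm{e}^{-\beta x}\bP(\bar X_{\infty}\le x)\,dx=\psi^{+}(0)/\psi(\beta)$, so that
\[
	W^{(0)}(x)=\frac{1}{\psi^{+}(0)}\,\bP(\bar X_{\infty}\le x)
\]
is the desired function: it is nonnegative, nondecreasing, vanishes on $(-\infty,0)$, and has the prescribed Laplace transform. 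Its continuity and strict increase on $[0,\infty)$ amount to the law of $\bar X_{\infty}$ having no atom on $(0,\infty)$ and full support, which is exactly where assumption \eqref{eq22} is used; the possible atom of $\bar X_{\infty}$ at $0$ corresponds to $W^{(0)}(0)>0$ in the finite-variation case.

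It remains to prove the exit identity \eqref{eq01}, first for $q=0$. Writing $\bar X_{t}:=\sup_{s\le t}X_{s}$, the Markov property makes $\bP_{X_{t}}(\bar X_{\infty}\le a)\,1\{\bar X_{t}\le a\}$ a bounded martingale, hence so is $W^{(0)}(a-X_{t\wedge\tau})$ with $\tau:=\tau^{-}_{0}\wedge\tau^{+}_{a}$. Because $X$ has no negative jumps it creeps downward, so $X_{\tau^{-}_{0}}=0$ on $\{\tau^{-}_{0}<\tau^{+}_{a}\}$, contributing the value $W^{(0)}(a)$; on $\{\tau^{+}_{a}<\tau^{-}_{0}\}$ the process passes above $a$, so $a-X_{\tau^{+}_{a}}\le0$ and $W^{(0)}(a-X_{\tau^{+}_{a}})=0$. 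Optional stopping then gives $\bP_{x}(\tau^{-}_{0}<\tau^{+}_{a})=W^{(0)}(a-x)/W^{(0)}(a)$. For general $q\ge0$ the change of measure with $c=\Phi(q)$ converts the discount into a boundary factor: since $X_{\tau^{-}_{0}}=0$ on $\{\tau^{-}_{0}<\tau^{+}_{a}\}$,
\[
	\bE_{x}[\mathrm{e}^{-q\tau^{-}_{0}},\tau^{+}_{a}>\tau^{-}_{0}]=\mathrm{e}^{-\Phi(q)x}\,\bP^{\Phi(q)}_{x}(\tau^{-}_{0}<\tau^{+}_{a})=\frac{W^{(q)}(a-x)}{W^{(q)}(a)},
\]
the last equality using the $q=0$ identity under $\bP^{\Phi(q)}$ together with $W_{\Phi(q)}(y)=\mathrm{e}^{-\Phi(q)y}W^{(q)}(y)$.

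The main obstacle is the construction step, namely confirming that $1/(\psi(\beta)-q)$ is genuinely the Laplace transform of an increasing, continuous function. This rests on the fluctuation identity for $\bar X_{\infty}$ and on a case analysis of \eqref{eq22} --- unbounded variation with or without a Gaussian component versus the finite-variation case --- to guarantee that $\bar X_{\infty}$ has no atom on $(0,\infty)$ and full support, which is what forces strict monotonicity and continuity. The boundary regime $q=0$ with $\psi^{+}(0)\le0$, where $\bar X_{\infty}=\infty$ and the representation degenerates, I would treat by monotone passage to the limit $q\downarrow0$ from the strictly supercritical case, identifying the limit through the Laplace-transform characterization. Once existence and regularity are in place, the optional-stopping argument for \eqref{eq01} is routine.
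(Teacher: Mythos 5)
The paper does not actually prove this statement --- it is imported verbatim from \cite[Theorem 8.1]{KyprianouText}, with the surrounding text explicitly saying proofs in this section are omitted --- and your proposal is essentially the standard proof of that cited result: Esscher tilting by $\Phi(q)$ to reduce to a process drifting to $-\infty$, realization of $W^{(0)}$ as the renormalized distribution function of $\bar{X}_{\infty}$ via the Wiener--Hopf identity $\bE[\mathrm{e}^{-\beta \bar{X}_{\infty}}]=\psi^{+}(0)\beta/\psi(\beta)$, and optional stopping of the resulting bounded martingale (using downward creeping, $X_{\tau^{-}_{0}}=0$) for \eqref{eq01}. The argument is correct; the one piece you leave as a sketch --- that the law of $\bar{X}_{\infty}$ has no atoms and full support on $(0,\infty)$ under \eqref{eq22}, which gives continuity and strict monotonicity --- is precisely the technical point you flag, and it goes through by the standard regularity/ladder-height case analysis.
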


We derive an easy consequence of Theorem \ref{thm:scaleFunc} (ii) for later use.

\begin{Cor} \label{cor:ratioOfScale}
	It holds for $q \geq 0$
	\begin{align}
		\lim_{a \to \infty} \frac{W^{(q)}(a-x)}{W^{(q)}(a)} = \mathrm{e}^{-x \Phi(q)} \quad (x \geq 0). \label{eq02}
	\end{align}
\end{Cor}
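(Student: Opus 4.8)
The plan is to read off the limit directly from the two probabilistic identities already established. By Theorem~\ref{thm:scaleFunc}~(ii), for every $a > x \geq 0$ the quantity on the left of \eqref{eq02} admits the representation
\begin{align}
	\frac{W^{(q)}(a-x)}{W^{(q)}(a)} = \bE_{x}[\mathrm{e}^{-q\tau^{-}_{0}}, \tau^{+}_{a} > \tau^{-}_{0}],
\end{align}
whereas Theorem~\ref{thm:LTofFHT} identifies the claimed limit as $\mathrm{e}^{-x\Phi(q)} = \bE_{x}[\mathrm{e}^{-q\tau^{-}_{0}}, \tau^{-}_{0} < \infty]$. Thus the corollary reduces to showing that the restricted expectation on the right converges, as $a \to \infty$, to the unrestricted one.

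To carry this out, I would fix $x \geq 0$ and let $a$ increase. Since $\tau^{+}_{a}$ is nondecreasing in $a$, the events $\{\tau^{+}_{a} > \tau^{-}_{0}\}$ increase with $a$. The key observation is that $\tau^{+}_{a} \to \infty$ holds $\bP_{x}$-almost surely as $a \to \infty$: the paths of $X$ are right-continuous with left limits, hence bounded on each compact time interval, so for any $T > 0$ one has $\tau^{+}_{a} > T$ once $a > \sup_{t \leq T} X_{t}$, which is finite. Consequently $\bigcup_{a} \{\tau^{+}_{a} > \tau^{-}_{0}\} = \{\tau^{-}_{0} < \infty\}$ almost surely, since on $\{\tau^{-}_{0} = \infty\}$ no (finite or infinite) $\tau^{+}_{a}$ can strictly exceed $\tau^{-}_{0}$.

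It then remains to pass the limit through the expectation. The integrand $\mathrm{e}^{-q\tau^{-}_{0}} 1\{\tau^{+}_{a} > \tau^{-}_{0}\}$ is nonnegative and nondecreasing in $a$, increasing pointwise to $\mathrm{e}^{-q\tau^{-}_{0}} 1\{\tau^{-}_{0} < \infty\}$, so the monotone convergence theorem yields $\lim_{a \to \infty} \bE_{x}[\mathrm{e}^{-q\tau^{-}_{0}}, \tau^{+}_{a} > \tau^{-}_{0}] = \bE_{x}[\mathrm{e}^{-q\tau^{-}_{0}}, \tau^{-}_{0} < \infty] = \mathrm{e}^{-x\Phi(q)}$, which is \eqref{eq02}. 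The only points genuinely requiring care are the almost-sure divergence $\tau^{+}_{a} \to \infty$ and the matching of the limiting event with $\{\tau^{-}_{0} < \infty\}$ on the set $\{\tau^{-}_{0} = \infty\}$; once these are settled, the monotone-convergence passage is routine, and no uniform integrability or domination is needed thanks to monotonicity.
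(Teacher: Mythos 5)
Your proposal is correct and follows essentially the same route as the paper: both identify the ratio with $\bE_{x}[\mathrm{e}^{-q\tau^{-}_{0}},\ \tau^{+}_{a}>\tau^{-}_{0}]$ via \eqref{eq01} and pass to the limit using Theorem \ref{thm:LTofFHT}. The paper states the limit of the truncated expectation without comment, whereas you supply the (routine but worthwhile) justification via monotonicity of the events and the monotone convergence theorem.
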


\begin{proof}
	From \eqref{eq01} and Theorem \ref{thm:LTofFHT}, we have
	\begin{align}
		\mathrm{e}^{-x \Phi(q)} = \lim_{a \to \infty} \bE_{x}[\mathrm{e}^{-q \tau^{-}_{0}}, \tau^{-}_{0} < \tau^{+}_{a} ] = \lim_{a \to \infty} \frac{W^{(q)}(a-x)}{W^{(q)}(a)} \quad (q \geq 0). \label{}
	\end{align}
\end{proof}

Following \cite{BertoinQSD}, the $q$-scale function $q \mapsto W^{(q)}(x)$ can be analytically extended to the entire function for every fixed $x \geq 0$.
The proof of the following proposition can be seen in \cite[Lemma 8.3]{KyprianouText}.
The characterization of the scale function by a kind of renewal equation \eqref{charEq} is useful for our purpose.
We denote the convolution of functions $f$ and $g$ on $\bR$ by
\begin{align}
	f \ast g (x) := \int_{\bR}f(x-y)g(y)dy \quad (x \in \bR) \label{}
\end{align}
and denote the $n$-th convolution of $f$ by $f^{\ast 1} := f$ and $f^{\ast n}(x) := f^{\ast(n-1)} \ast f \ (n \geq 2)$.

\begin{Prop}[{\cite[Lemma 4]{BertoinQSD} and \cite[Lemma 8.3]{KyprianouText}}] \label{prop:extScale}
	For each $x \geq 0$, the function $q \mapsto W^{(q)}(x)$ may be analytically extended in $q$ to $\bC$.
	Moreover, the extension satisfies
	\begin{align}
		W^{(q)}(x) = \sum_{k \geq 0}q^{k} W^{\ast (k+1)}(x) \quad (q \in \bC, \ x \geq 0) \label{seriesExpOfScale}
	\end{align}
	and
	\begin{align}
		|W^{(q)}(x)| \leq W^{(|q|)}(x) \leq W(x) \mathrm{e}^{|q| \int_{0}^{x}W(y)dy}. \label{}
	\end{align}
	In addition, it holds
	\begin{align}
		\int_{0}^{\infty} \mathrm{e}^{-\beta x} W^{(q)}(x) dx = \frac{1}{\psi(\beta) - q} \quad (q \in \bC, \ \beta > \Phi(|q|)) \label{eq05}
	\end{align}
	and $f = W^{(q)}$ is the unique solution for the following equation for each $r \in \bC$:
	\begin{align}
		f(x) = W^{(r)}(x) + (q-r) W^{(r)} \ast f(x) \quad (x \geq 0) \label{charEq}
	\end{align}
	for a function $f$ with $\int_{0}^{\infty}\mathrm{e}^{-\beta x}|f(x)|dx < \infty$ for large $\beta > 0$.
\end{Prop}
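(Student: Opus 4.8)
The plan is to treat the series in \eqref{seriesExpOfScale} as the \emph{definition} of the extension and to reduce the whole statement to estimates on the iterated convolutions $W^{\ast(k+1)}$ of the ordinary scale function $W = W^{(0)}$, which by Theorem \ref{thm:scaleFunc}(i) is non-negative, continuous, increasing on $[0,\infty)$ and vanishing on $(-\infty,0)$, with $\int_0^\infty \mathrm{e}^{-\beta x}W(x)\,dx = 1/\psi(\beta)$ for $\beta > \Phi(0)$. Writing $V(x) := \int_0^x W(y)\,dy$, the key estimate I would prove is the convolution bound $W^{\ast(k+1)}(x) \le W(x)V(x)^k/k!$ for all $k \ge 0$ and $x \ge 0$. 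The factor $1/k!$ is the crux and the main obstacle: a naive induction using only $W^{\ast(k+1)}(x)=\int_0^x W(x-y)W^{\ast k}(y)\,dy \le W(x)\int_0^x W^{\ast k}(y)\,dy$ loses it. To recover it I would instead run the induction on the integrated quantity $I_k(x):=\int_0^x W^{\ast k}(y)\,dy$, proving $I_k(x)\le V(x)^k/k!$: from $W^{\ast(k+1)}(x)\le W(x)I_k(x)$ (using $W(x-y)\le W(x)$) one gets $I_{k+1}(x)\le \int_0^x W(u)I_k(u)\,du \le \int_0^x V'(u)V(u)^k/k!\,du = V(x)^{k+1}/(k+1)!$, where keeping $I_k(u)$ \emph{inside} the integral rather than bounding it at the endpoint is exactly what produces the missing factor $1/(k+1)$.

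Granting the bound, I would finish the first half as follows. For each fixed $x\ge 0$ the terms of $\sum_{k\ge 0}q^k W^{\ast(k+1)}(x)$ are dominated by $|q|^k W(x)V(x)^k/k!$, so the series converges locally uniformly in $q\in\bC$ and hence defines an entire function of $q$ whose modulus is at most $W(x)\mathrm{e}^{|q|V(x)}$; this is the asserted bound together with the analyticity claim. To see that this entire function is the genuine extension of $q\mapsto W^{(q)}(x)$, I would check that it agrees with the scale function for $q\ge 0$: integrating the series term by term against $\mathrm{e}^{-\beta x}$ and using that $W^{\ast(k+1)}$ has Laplace transform $\psi(\beta)^{-(k+1)}$ gives $\sum_{k\ge0}q^k\psi(\beta)^{-(k+1)} = 1/(\psi(\beta)-q)$ for $\beta>\Phi(q)$, which matches the transform in Theorem \ref{thm:scaleFunc}(i); since both functions are continuous, uniqueness of the Laplace transform identifies them. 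As two entire functions agreeing on $[0,\infty)$ coincide, the series is the unique analytic extension.

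The remaining identities follow the same transform computation. For \eqref{eq05}, the condition $\beta>\Phi(|q|)$ forces $\psi(\beta)>|q|$, so $|q/\psi(\beta)|<1$; the term-by-term integration is legitimate by Tonelli, since $\sum_{k\ge0}|q|^k\psi(\beta)^{-(k+1)}=1/(\psi(\beta)-|q|)<\infty$, and summing the geometric series yields $1/(\psi(\beta)-q)$. For the renewal equation \eqref{charEq} I would first verify that $f=W^{(q)}$ solves it by comparing Laplace transforms: for large $\beta$ the transform of the right-hand side is $\tfrac{1}{\psi(\beta)-r}\bigl(1+\tfrac{q-r}{\psi(\beta)-q}\bigr)=1/(\psi(\beta)-q)$, matching \eqref{eq05}. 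For uniqueness within the stated integrability class, I would set $g=f-W^{(q)}$, note $g=(q-r)W^{(r)}\ast g$, take Laplace transforms to obtain $\hat g(\beta)\,\tfrac{\psi(\beta)-q}{\psi(\beta)-r}=0$ for all large $\beta$, hence $\hat g\equiv 0$ on a half-line, and conclude $g\equiv 0$ by injectivity of the Laplace transform. The only points needing care beyond the factorial estimate are the repeated appeals to Fubini/Tonelli and to Laplace-transform injectivity, both routine given the exponential-integrability bounds already established.
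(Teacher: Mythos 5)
Your proof is correct, and it is essentially the standard argument: the paper itself gives no proof of this proposition but defers to \cite[Lemma 4]{BertoinQSD} and \cite[Lemma 8.3]{KyprianouText}, whose proofs proceed exactly as you do --- define the extension by the series \eqref{seriesExpOfScale}, establish the factorial convolution bound by induction (your device of keeping $I_k(u)$ inside the integral is precisely how the $1/k!$ is obtained there), identify the extension for $q\ge 0$ and verify \eqref{eq05} and \eqref{charEq} by Laplace transforms. No gaps; the Tonelli and injectivity appeals are justified by the absolute convergence $\sum_k |q|^k\psi(\beta)^{-(k+1)}<\infty$ for $\beta>\Phi(|q|)$, as you note.
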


% \begin{proof}
% 	At first, we show that the series
% 	\begin{align}
% 		\tilde{W}^{(q)}(x) := \sum_{k \geq 0}q^{k} W^{\ast (k+1)}(x) \quad (x \geq 0, \ q \in \bC) \label{}
% 	\end{align}
% 	is absolutely convergent.
% 	Since it holds
% 	\begin{align}
% 		W^{\ast 2}(x) = \int_{0}^{x} W(x-y)W(y)dy \leq W(x) G(x) \label{}
% 	\end{align}
% 	for $G(x) := \int_{0}^{x} W(y) dy$, we have inductively
% 	\begin{align}
% 		W^{\ast(k+1)}(x) &= \int_{0}^{x} W^{\ast k}(x-y) dG(y) \leq \frac{1}{k!}W(x)G(x)^{k}. \label{}
% 	\end{align}
% 	Thus, we have
% 	\begin{align}
% 		\sum_{k \geq 0}|q|^{k} W^{\ast (k+1)}(x) \leq W(x) \mathrm{e}^{|q| G(x)} < \infty. \label{}
% 	\end{align}
% 	Let $q \in \bC$ and $\beta > \Phi(|q|)$. Note that $\psi(\beta) > |q|$. Then it holds
% 	\begin{align}
% 		\int_{0}^{\infty}\mathrm{e}^{-\beta x} \tilde{W}^{(q)}(x) dx 
% 		&= \sum_{k \geq 0} q^{k} \int_{0}^{\infty} \mathrm{e}^{-\beta x} W^{\ast (k+1)}(x)dx \label{} \\
% 		&= \sum_{k \geq 0} q^{k} \frac{1}{\psi(\beta)^{k+1}} \label{} \\
% 		&= \frac{1}{\psi(\beta) - q}. \label{eq04}
% 	\end{align}
% 	Thus, from the uniqueness of the Laplace transform and Theorem \ref{thm:scaleFunc} (i), we have $\tilde{W}^{(q)}(x) = W^{(q)}(x) \ (x \geq 0)$ for $q \geq 0$.
% 	The fact that $f = W^{(q)}$ is the unique solution of \eqref{charEq} easily follows by the Laplace transform of both sides and \eqref{eq05}. 
% \end{proof}

\subsection{Potential density}

For $q \geq 0$, define the $q$-potential measure killed on exiting the interval $[0,a]$ by
\begin{align}
	U^{(q)}(a,x,dy) := \int_{0}^{\infty} \mathrm{e}^{-q t} \bP_{x}[X_{t} \in dy, \tau^{+}_{a} \wedge \tau^{-}_{0} > t]dt \quad (x \in [0,a]). \label{}
\end{align}
The $q$-potential density exists and can be represented by the scale function.
\begin{Thm}[{\cite[Theorem 8.7]{KyprianouText}}] \label{thm:potentialDensity}
	For $q \geq 0$, the $q$-potential measure has a density $u^{(q)}(a,x,y)$ w.r.t. the Lebesgue measure:
	\begin{align}
		U^{(q)}(a,x,dy) = u^{(q)}(a,x,y)dy \quad (x,y \in [0,a]) \label{}
	\end{align}
	for
	\begin{align}
		u^{(q)}(a,x,y) := \frac{W^{(q)}(a-x) W^{(q)}(y)}{W^{(q)}(a)} - W^{(q)}(y-x). \label{potentialDensityTwoSide}
	\end{align}
\end{Thm}
By taking the limit as $a \to \infty$ in \eqref{potentialDensityTwoSide}, from Corollary \ref{cor:ratioOfScale} we obtain the potential density killed on exiting the half-line.

\begin{Cor}[{\cite[Corollary 8.8]{KyprianouText}}] \label{cor:potentialDensity}
	For $q \geq 0$, the $q$-potential measure killed on exiting $[0,\infty)$ has a density w.r.t. the Lebesgue measure:
	\begin{align}
		\int_{0}^{\infty} \mathrm{e}^{-q t} \bP_{x}[X_{t} \in dy, \tau^{-}_{0} > t]dt = u^{(q)}(x,y)dy  \quad (x,y \geq 0)\label{}
	\end{align}
	for
	\begin{align}
		u^{(q)}(x,y) := \mathrm{e}^{-x \Phi(q)}W^{(q)}(y) - W^{(q)}(y-x). \label{eq29}
	\end{align}
\end{Cor}

\section{Proof of the main results} \label{section:proofs}

To prove Theorem \ref{thm:charOfQSD}, we prepare the following four auxiliary lemmas.

First, we show that if there exists a quasi-stationary distribution, its density is given by a scale function.
Note that for a quasi-stationary distribution $\nu$ the lifetime distribution $\bP_{\nu}[\tau^{-}_{0} \in dt]$ is exponentially distributed since it holds from the Markov property $\bP_{\nu}[\tau^{-}_{0} > t+s] = \bP_{\nu}[\tau^{-}_{0} > t]\bP_{\nu}[\tau^{-}_{0} > s] \ (t,s > 0)$.

\begin{Lem}\label{lem:charOfQSD1}
	Assume $\nu$ is a quasi-stationary distribution for $X$ such that $\bP_{\nu}[\tau^{-}_{0} > t] = \mathrm{e}^{-\lambda t} \ (t > 0)$ for some $\lambda > 0$. 
	Then $\nu$ has a density w.r.t. the Lebesgue measure and its density can be given by $\lambda W^{(-\lambda)}(x)$; $\nu(dx) = \lambda W^{(-\lambda)}(x)dx$.
\end{Lem}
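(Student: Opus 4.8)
The plan is to convert the defining relation \eqref{eq25} into an eigenmeasure identity, extract the density of $\nu$ from the $0$-potential, and finally identify that density via the uniqueness in the renewal equation \eqref{charEq}. Since $\bP_{\nu}[\tau^{-}_{0} > t] = \mathrm{e}^{-\lambda t}$, quasi-stationarity is equivalent to
\begin{align*}
	\bP_{\nu}[X_{t} \in dy, \tau^{-}_{0} > t] = \mathrm{e}^{-\lambda t}\nu(dy) \quad (t > 0).
\end{align*}
Integrating this in $t$ over $(0,\infty)$ (that is, evaluating the $q$-potential at $q = 0$) and using Tonelli's theorem to interchange the time integral with the $\nu$-average, the left-hand side becomes $\int_{[0,\infty)}\nu(dx)\, U^{(0)}(x,dy)$ and the right-hand side becomes $\lambda^{-1}\nu(dy)$. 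The total mass $\lambda^{-1}$ is finite, so all quantities are well defined.

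Before using the explicit potential density, I would verify that Corollary \ref{cor:potentialDensity} applies, i.e. that $\bP_{x}[\tau^{-}_{0} < \infty] = 1$ for every $x \geq 0$, equivalently $\Phi(0) = 0$. This is the one genuinely delicate point. From $\bP_{\nu}[\tau^{-}_{0} > t] = \mathrm{e}^{-\lambda t} \to 0$ we get $\bP_{\nu}[\tau^{-}_{0} < \infty] = 1$, while letting $q \downarrow 0$ in \eqref{eq12} gives $\bP_{x}[\tau^{-}_{0} < \infty] = \mathrm{e}^{-x\Phi(0)}$, so that $\int_{[0,\infty)}(1 - \mathrm{e}^{-x\Phi(0)})\nu(dx) = 0$. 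As the integrand is non-negative, either $\Phi(0) = 0$ or $\nu = \delta_{0}$. The latter is ruled out because, under \eqref{eq22}, the boundary point $0$ is regular for $(-\infty,0)$, whence $\tau^{-}_{0} = 0$ holds $\bP_{0}$-a.s. and $\bP_{0}[\tau^{-}_{0} > t] = 0 \neq \mathrm{e}^{-\lambda t}$. Hence $\Phi(0) = 0$, and Corollary \ref{cor:potentialDensity} yields $U^{(0)}(x,dy) = \big(W(y) - W(y-x)\big)\,dy$.

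Substituting this density shows $\lambda^{-1}\nu(dy) = \big(\int_{[0,\infty)}(W(y) - W(y-x))\,\nu(dx)\big)dy$, so $\nu$ is absolutely continuous; writing $\nu(dx) = f(x)\,dx$ for its density and using $\int \nu(dx) = 1$, the convolution term becomes $W \ast f$ and we arrive at the renewal equation
\begin{align*}
	f(y) = \lambda W(y) - \lambda\, W \ast f(y) \quad (y \geq 0).
\end{align*}
Setting $h := \lambda^{-1} f$, this reads $h = W + (-\lambda)\,W \ast h$, which is precisely \eqref{charEq} with $r = 0$ and $q = -\lambda$. Since $f$ is a probability density, $\int_{0}^{\infty}\mathrm{e}^{-\beta y}|h(y)|\,dy \leq \lambda^{-1} < \infty$ for every $\beta \geq 0$, so $h$ satisfies the integrability requirement of Proposition \ref{prop:extScale}. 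By the uniqueness asserted there, $h = W^{(-\lambda)}$, whence $f = \lambda W^{(-\lambda)}$ and $\nu(dx) = \lambda W^{(-\lambda)}(x)\,dx$, as claimed. The main obstacle is the verification that $\Phi(0) = 0$ (equivalently, the exclusion of the degenerate candidate $\nu = \delta_{0}$); once Corollary \ref{cor:potentialDensity} is available, the remainder is a direct resolvent computation closed off by the uniqueness in \eqref{charEq}.
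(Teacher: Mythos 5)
Your proof is correct and follows essentially the same route as the paper's: integrate the eigenmeasure identity over $t$, apply Corollary \ref{cor:potentialDensity} with $q=0$ to obtain absolute continuity and the renewal equation $f = \lambda W - \lambda W\ast f$, and conclude by the uniqueness in \eqref{charEq}. Your additional verification that $\Phi(0)=0$ (ruling out $\nu=\delta_{0}$ via regularity of $0$ for $(-\infty,0)$) is a hypothesis check that the paper's proof passes over silently, and it is a worthwhile addition rather than a deviation.
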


\begin{proof}
	From the definition of quasi-stationary distributions and Corollary \ref{cor:potentialDensity} for $q = 0$,
	it holds
	\begin{align}
		\nu(dy) = \lambda\int_{0}^{\infty} \bP_{\nu}[X_{t} \in dy, \tau^{-}_{0} > t]dt 
		= \lambda \left(\int_{0}^{\infty}u^{(0)}(x,y)\nu(dx)\right)dy. \label{}
	\end{align}
	Thus, $\nu$ is absolutely continuous. We denote the density by $\rho$.
	Again from Corollary \ref{cor:potentialDensity}, we have
	\begin{align}
		\rho(x) &= \lambda \int_{0}^{\infty}(W(x) - W(x-y))\rho(y)dy  \label{} \\
		&= \lambda W(x) - \lambda W \ast \rho (x). \label{} 
	\end{align}
	It follows $\rho(x) = \lambda W^{(-\lambda)}(x)$ a.e. from \eqref{charEq}.
\end{proof}

Non-negativity of $W^{(-\lambda)}$ implies the integrability.

\begin{Lem} \label{lem:non-negativeImpliyIntegrable}
	Let $\lambda > 0$. If the function $W^{(-\lambda)}$ is non-negative on $(0,\infty)$,
	the function $W^{(-\lambda)}(x)$ is integrable on $(0,\infty)$ and it holds
	\begin{align}
		1 = \lambda \int_{0}^{\infty} W^{(-\lambda)}(y)dy. \label{eq06}
	\end{align} 
\end{Lem}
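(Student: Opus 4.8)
The plan is to read the integral $\int_{0}^{\infty} W^{(-\lambda)}(y)\,dy$ off the Laplace transform of $W^{(-\lambda)}$ and then to let the Laplace variable tend to $0$. Taking $q = -\lambda$ in \eqref{eq05} gives
\begin{align*}
	g(\beta) := \int_{0}^{\infty} \mathrm{e}^{-\beta y} W^{(-\lambda)}(y)\,dy = \frac{1}{\psi(\beta) + \lambda} \quad (\beta > \Phi(\lambda)).
\end{align*}
By hypothesis $W^{(-\lambda)} \geq 0$ on $(0,\infty)$, so $g(\beta) \in [0,\infty]$ is well defined and non-increasing in $\beta \in \bR$, and by monotone convergence $g(\beta) \uparrow \int_{0}^{\infty} W^{(-\lambda)}(y)\,dy$ as $\beta \downarrow 0$. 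Consequently, once I know that $g$ remains finite and equal to $1/(\psi(\beta)+\lambda)$ all the way down to $\beta = 0$, I obtain $\int_{0}^{\infty} W^{(-\lambda)}(y)\,dy = 1/(\psi(0)+\lambda) = 1/\lambda$ (using $\psi(0) = 0$), which is precisely \eqref{eq06}.

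The real work is thus to propagate the identity $g = 1/(\psi + \lambda)$ from the region $\beta > \Phi(\lambda)$, where \eqref{eq05} is granted, down to $\beta = 0$; equivalently, to control the abscissa of convergence $\beta_{c}$ of $g$. My approach is as follows. On $\{\beta > \beta_{c}\}$ the Laplace transform $g$ is analytic, and it agrees with the function $\beta \mapsto 1/(\psi(\beta)+\lambda)$ on the interval $(\Phi(\lambda),\infty)$, so the latter is the analytic continuation of $g$. Since $W^{(-\lambda)} \geq 0$, the transform $g$ has the Landau--Pringsheim property: its abscissa of convergence $\beta_{c}$ must be a singular point of its analytic continuation. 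The only real singularities of $1/(\psi(\cdot)+\lambda)$ are the zeros of $\psi + \lambda$, so $\beta_{c} \leq \beta^{\ast}$, where $\beta^{\ast}$ is the rightmost real solution of $\psi(\beta) = -\lambda$.

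It then remains to verify $\beta^{\ast} < 0$, for in that case $\beta_{c} < 0$, the function $g$ is finite and analytic on an interval containing $[0,\infty)$, and the evaluation $g(0) = 1/\lambda$ above is justified. Here the non-negativity of $W^{(-\lambda)}$ is what rules out a non-negative rightmost pole: a simple pole at some $\beta^{\ast} \geq 0$ would force $W^{(-\lambda)}(y) \sim \mathrm{e}^{\beta^{\ast} y}/\psi'(\beta^{\ast})$ as $y \to \infty$, and reconciling the sign of this tail with $W^{(-\lambda)} \geq 0$, together with $\psi(0) = 0$ and the convexity of $\psi$, confines the rightmost zero of $\psi + \lambda$ to $(-\infty,0)$.

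I expect the passage across $(0,\Phi(\lambda)]$ to be the only delicate step. The monotone-convergence reduction and the final evaluation are routine; what is genuinely needed is the Landau--Pringsheim input together with the location of the rightmost zero of $\psi + \lambda$. This is also the sole place where the hypothesis $W^{(-\lambda)} \geq 0$ is used in an essential way, both to legitimise the monotone limit and to force the rightmost singularity of $1/(\psi+\lambda)$ to lie at $\beta^{\ast} < 0$.
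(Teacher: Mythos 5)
Your overall strategy --- reading $\int_{0}^{\infty}W^{(-\lambda)}$ off the Laplace transform \eqref{eq05} and letting $\beta\downarrow 0$ by monotone convergence --- is a legitimate alternative to the paper's argument, which instead combines the renewal equation \eqref{charEq} with the exit identity \eqref{eq01} to get \eqref{eq11} and then uses that the increasing function $W$ cannot be integrable. But the step you yourself single out as the crux, namely that the rightmost real singularity of $1/(\psi+\lambda)$ lies in $(-\infty,0)$, is exactly where the proposal breaks, and the sign argument you sketch cannot be repaired. At the rightmost real solution $\beta^{\ast}$ of $\psi(\beta)=-\lambda$, strict convexity of $\psi$ together with $\psi(\infty)=\infty$ forces $\psi'(\beta^{\ast})\geq 0$, so the residue of $1/(\psi+\lambda)$ there is $1/\psi'(\beta^{\ast})>0$ (and a double root gives leading coefficient $2/\psi''(\beta^{\ast})>0$); the induced tail $\mathrm{e}^{\beta^{\ast}y}/\psi'(\beta^{\ast})$ is \emph{positive}, so there is no sign conflict with $W^{(-\lambda)}\geq 0$ to exploit. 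Concretely, take $X_{t}=\mu t+\sigma B_{t}$ with $\mu>0$, so $\psi(\beta)=-\mu\beta+\tfrac{1}{2}\sigma^{2}\beta^{2}$; for $0<\lambda<\mu^{2}/(2\sigma^{2})$ one finds $W^{(-\lambda)}(x)=\frac{2}{\sigma^{2}(\beta_{2}-\beta_{1})}(\mathrm{e}^{\beta_{2}x}-\mathrm{e}^{\beta_{1}x})$ with $0<\beta_{1}<\beta_{2}$, which is strictly positive on $(0,\infty)$ yet not integrable. Hence no argument using only $W^{(-\lambda)}\geq 0$ can place $\beta^{\ast}$ in $(-\infty,0)$; what is really needed is the drift condition $\psi^{+}(0)\geq 0$ (equivalently $\bP_{x}[\tau^{-}_{0}<\infty]=1$), under which $\psi+\lambda\geq\lambda>0$ on $[0,\infty)$ by convexity and the issue disappears. (The paper's own proof uses the same hypothesis silently, when it replaces $\lim_{x\to\infty}\bP_{y}[\tau^{+}_{x}>\tau^{-}_{0}]$ by $1$ in passing from \eqref{eq11} to the limit identity.)

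Two smaller points. First, even granting $\psi^{+}(0)\geq 0$, your claim that the only real singularities of $1/(\psi+\lambda)$ are the zeros of $\psi+\lambda$ is not correct: $\psi$ itself ceases to be analytic at $\beta=0$ from the left whenever $\int_{1}^{\infty}\mathrm{e}^{\delta x}\Pi(dx)=\infty$ for every $\delta>0$, so the abscissa of convergence $\beta_{c}$ may equal $0$ rather than being strictly negative. This is harmless for your endgame --- Landau--Pringsheim still yields $\beta_{c}\leq 0$, the identity $g=1/(\psi+\lambda)$ extends to $(0,\infty)$ by analytic continuation (any zero of $\psi+\lambda$ in $\{\Re\beta>\beta_{c}\}$ would be a pole of a function equal there to the analytic $g$, hence cannot occur), and the monotone-convergence step only needs $\beta\downarrow 0$ through positive values plus continuity of $\psi$ at $0+$ --- but you should not assert $\beta_{c}<0$. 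Second, the asymptotic $W^{(-\lambda)}(y)\sim\mathrm{e}^{\beta^{\ast}y}/\psi'(\beta^{\ast})$ is a Tauberian statement that would itself require justification; as explained above, it is in any case of no help here.
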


\begin{proof}
	From \eqref{eq01} for $q = 0$ and the equation \eqref{charEq} for $q=-\lambda, \ r = 0$, we have
	\begin{align}
		\frac{W^{(-\lambda)}(x)}{W(x)} = 1 - \lambda \int_{0}^{x} \bP_{y}[\tau^{-}_{0} < \tau^{+}_{x}] W^{(-\lambda)}(y)dy. \label{eq11}
	\end{align}
	By the monotone convergence theorem, we see
	\begin{align}
		0 \leq \lim_{x \to \infty} \frac{W^{(-\lambda)}(x)}{W(x)} = 1 - \lambda \int_{0}^{\infty}W^{(-\lambda)}(x)dx, \label{}
	\end{align}
	and we obtain the integrability of $W^{(-\lambda)}$.
	If $\lim_{x \to \infty} W^{(-\lambda)}(x) / W(x) =: \delta > 0$,
	it follows $W^{(-\lambda)}(x) > (\delta/2)W(x)$ for large $x$,
	and it implies that $W$ is integrable on $(0,\infty)$.
	It is, however, impossible since $W$ is increasing.
	Hence, we obtain \eqref{eq06}.
\end{proof}

To show the positivity of $W^{(-\lambda)}(x)$, we need the exponential integrability of the exit time.
We owe the proof of the following proposition largely to \cite[Theorem 2]{BertoinQSD}.

\begin{Lem} \label{lem:non-negativityOfScale}
	Assume $\lambda_{0} > 0$.
	Then for $0 < \lambda \leq \lambda_{0}$, it holds $W^{(-\lambda)}(x) > 0 \ (x > 0)$.
\end{Lem}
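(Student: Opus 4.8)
The plan is to obtain non\-negativity of $W^{(-\lambda)}$ first and then upgrade it to strict positivity, splitting into the two regimes $0<\lambda<\lambda_{0}$ and $\lambda=\lambda_{0}$. In the first regime the exit time is genuinely exponentially integrable, so I can lean on Bertoin's finite\-interval analysis and get strict positivity directly. At the endpoint $\lambda=\lambda_{0}$ only a limiting (hence a priori non\-strict) bound survives, and I expect the strict positivity there to be the main obstacle: it has to be extracted by a monotonicity argument combined with a Laplace\-transform singularity.

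For $0<\lambda<\lambda_{0}$, fix $x>0$ and work on the interval $[0,x]$. By \eqref{eq08} and the remark following it, $\bE_{y}[\mathrm{e}^{\lambda\tau^{-}_{0}}]<\infty$ for every $y>0$, so $\bE_{y}[\mathrm{e}^{\lambda\tau^{-}_{0}},\tau^{+}_{x}>\tau^{-}_{0}]\le \bE_{y}[\mathrm{e}^{\lambda\tau^{-}_{0}}]<\infty$ for $y\in(0,x)$. By the argument of \cite[Theorem 2]{BertoinQSD}, the two\-sided functional $\bE_{y}[\mathrm{e}^{\lambda\tau^{-}_{0}},\tau^{+}_{x}>\tau^{-}_{0}]$ is finite exactly when $\lambda$ lies below the first zero $\rho(x):=\inf\{\mu\ge 0\mid W^{(-\mu)}(x)=0\}$ of $\mu\mapsto W^{(-\mu)}(x)$; since this map is continuous (indeed entire, by Proposition \ref{prop:extScale}) and equals $W(x)>0$ at $\mu=0$, it stays positive on $[0,\rho(x))$. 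The finiteness just observed therefore forces $\lambda<\rho(x)$ and hence $W^{(-\lambda)}(x)>0$. As $x>0$ was arbitrary, $W^{(-\lambda)}$ is positive on $(0,\infty)$.

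For $\lambda=\lambda_{0}$, non\-negativity comes for free: by continuity of $\mu\mapsto W^{(-\mu)}(x)$ and the previous step, $W^{(-\lambda_{0})}(x)=\lim_{\lambda\uparrow\lambda_{0}}W^{(-\lambda)}(x)\ge 0$. To promote this to strict positivity I would use the identity \eqref{eq11}, namely $W^{(-\lambda_{0})}(x)/W(x)=1-\lambda_{0}\int_{0}^{x}\bP_{y}[\tau^{+}_{x}>\tau^{-}_{0}]\,W^{(-\lambda_{0})}(y)\,dy$. The point is that $x\mapsto\bP_{y}[\tau^{+}_{x}>\tau^{-}_{0}]$ is non\-decreasing for fixed $y$, because $\tau^{+}_{x}$ is non\-decreasing in $x$ so the event $\{\tau^{+}_{x}>\tau^{-}_{0}\}$ grows with $x$. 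Together with $W^{(-\lambda_{0})}\ge 0$ this makes the right\-hand integral non\-decreasing in $x$, so $x\mapsto W^{(-\lambda_{0})}(x)/W(x)$ is non\-increasing. Since this ratio tends to $1$ as $x\downarrow 0$, if it vanished at some first point $x_{0}>0$ it would remain $\le 0$, hence, being also $\ge 0$, identically $0$ on $[x_{0},\infty)$.

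It then remains to exclude $W^{(-\lambda_{0})}\equiv 0$ on a half\-line $[x_{0},\infty)$, which I regard as the crux. If it held, $W^{(-\lambda_{0})}$ would be supported in $[0,x_{0}]$ and its Laplace transform $\int_{0}^{x_{0}}\mathrm{e}^{-\beta x}W^{(-\lambda_{0})}(x)\,dx$ would be an entire function of $\beta$. On the other hand, by \eqref{eq05} this transform equals $1/(\psi(\beta)+\lambda_{0})$ for $\beta>\Phi(\lambda_{0})$. By Theorem \ref{thm:spectralBottom} one has $\psi(-\theta_{0})=-\lambda_{0}$ with $|\psi(-\theta_{0})|<\infty$, while $\psi'(-\theta_{0})\ge 0$ (by the definition \eqref{thetaZero} of $\theta_{0}$) and strict convexity give $\psi(\beta)>-\lambda_{0}$ for all $\beta>-\theta_{0}$; thus $1/(\psi(\beta)+\lambda_{0})$ is analytic on a neighbourhood of $(-\theta_{0},\infty)$ and blows up to $+\infty$ as $\beta\downarrow-\theta_{0}$. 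Continuing the entire transform analytically from $(\Phi(\lambda_{0}),\infty)$ down to $-\theta_{0}$ then contradicts its continuity there. This contradiction rules out the half\-line zero set, yielding $W^{(-\lambda_{0})}>0$ on $(0,\infty)$ and completing the proof. The essential input for the endpoint is precisely the existence of this singularity, i.e. that $-\lambda_{0}$ is attained as a value of $\psi$ at the finite point $-\theta_{0}$.
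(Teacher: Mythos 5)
Your argument for $0<\lambda<\lambda_{0}$ is essentially the paper's: the paper also deduces positivity of $W^{(-\lambda)}(x)$ from the finiteness of $\int_{0}^{\infty}\mathrm{e}^{\lambda t}\bP_{y}[\tau^{+}_{x}\wedge\tau^{-}_{0}>t]\,dt$, by analytically extending the two--sided potential identity \eqref{potentialDensityTwoSide} and letting $q\to-\lambda$ to force a blow-up of the right-hand side at the first zero of $\mu\mapsto W^{(-\mu)}(x)$ (this is the content of \eqref{eq09}); your appeal to the argument of Bertoin's Theorem 2 is the same mechanism, just cited rather than rederived. Where you genuinely diverge is the endpoint $\lambda=\lambda_{0}$. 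The paper stays elementary: once $W^{(-\lambda_{0})}\ge 0$ is known by continuity, Lemma \ref{lem:non-negativeImpliyIntegrable} gives $\lambda_{0}\int_{0}^{\infty}W^{(-\lambda_{0})}=1$, and substituting this into \eqref{eq11} rewrites $W^{(-\lambda_{0})}(x)$ as $\lambda_{0}W(x)$ times an integral of $W^{(-\lambda_{0})}$ against a strictly positive kernel, which is manifestly positive. Your route --- monotonicity of $x\mapsto W^{(-\lambda_{0})}(x)/W(x)$ reducing the problem to excluding a compactly supported $W^{(-\lambda_{0})}$, then a contradiction from the singularity of $1/(\psi(\beta)+\lambda_{0})$ at $\beta=-\theta_{0}$ --- is correct and arguably more illuminating about \emph{why} the endpoint scale function cannot vanish (the Laplace transform must have a finite singularity), but it is heavier machinery for the same conclusion.

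The one point you must address explicitly is the forward reference: you invoke Theorem \ref{thm:spectralBottom} ($\psi(-\theta_{0})=-\lambda_{0}$ with $|\psi(-\theta_{0})|<\infty$), but in the paper that theorem is proved \emph{after} and partly \emph{by means of} Lemma \ref{lem:non-negativityOfScale}, so there is a prima facie circularity. It can be untangled: the direction $-\psi(-\theta_{0})\le\lambda_{0}$ rests on the Esscher transform together with Lemmas \ref{lem:non-negativeImpliyIntegrable} and \ref{lem:scaleFuncInduceQSD} (not on this lemma), and the direction $-\psi(-\theta_{0})\ge\lambda_{0}$ rests on Lemma \ref{lem:propOfWPhi} and Proposition \ref{prop:ExponentialIntegrabilityOfFTImpliesFinitenessOfMGF}, which only use positivity of $W^{(-\lambda)}$ for $\lambda$ \emph{strictly} below $\lambda_{0}$ --- exactly what your first step already provides. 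But as written your proof does not perform this check, and without it the endpoint case is not logically closed. Either add that verification or replace the endpoint step by the paper's self-contained argument via \eqref{eq06} and \eqref{eq11}.
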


\begin{proof}
	Fix $0 < \lambda < \lambda_{0}$.
	We prove by contradiction.
	Suppose $W^{(-\lambda)}(x) = 0$ for some $x > 0$.
	Let $x_{0}$ be infimum of the root of $W^{(-\lambda)}(x)$ on $x > 0$.
	Since $W^{(-\lambda)}$ is increasing near $x = 0$, it holds $x_{0} > 0$.
	From the analytic extension of \eqref{potentialDensityTwoSide}, we have for $q \in \bC$ such that $W^{(q)}(x_{0}) \neq 0$ and $\Re q > -\lambda_{0}$ and for $0 < x < x_{0}$
	\begin{align}
		&\int_{0}^{\infty}\mathrm{e}^{-q t} \bP_{x}[\tau^{+}_{x_{0}} \wedge \tau^{-}_{0} > t]dt \label{} \\
		= &\frac{W^{(q)}(x_{0} - x)}{W^{(q)}(x_{0})} \int_{0}^{x_{0}} W^{(q)}(y)dy - \int_{0}^{x_{0}} W^{(q)}(y - x)dy \label{eq09}
	\end{align}
	Taking the limit as $q \to -\lambda$, we have $\int_{0}^{\infty}\mathrm{e}^{\lambda t} \bP_{x}[\tau^{+}_{x_{0}} \wedge \tau^{-}_{0} > t]dt \to \infty$, and this is contradiction.
	Thus, we obtain the positivity of $W^{(-\lambda)}(x)$ on $(0,\infty)$ for $0 < \lambda < \lambda_{0}$.
	Since $W^{(q)}(x)$ is continuous in $q$, the function $W^{(-\lambda_{0})}(x)$ is non-negative.
	From \eqref{eq06} and \eqref{eq11}, we obtain
	\begin{align}
		W^{(-\lambda_{0})}(x) =  \lambda W(x) \int_{0}^{\infty} \bP_{y}[\tau_{x}^{+} \leq \tau^{-}_{0}] W^{(-\lambda_{0})}(y)dy > 0 \quad  (x > 0). \label{}
	\end{align}

	% Suppose $W^{(-\lambda_{0})}(x_{0}) = 0$ for some $x_{0} > 0$.
	% We may assume $x_{0}$ is the infimum of such roots.
	% From \eqref{eq09} for $x_{0}$, we have
	% \begin{align}
	% 	\lim_{q \to -\lambda_{0}+0} \int_{0}^{\infty}\mathrm{e}^{-q t} \bP_{x}[\tau^{+}_{x_{0}} \wedge \tau^{-}_{0} > t]dt = \infty. \label{}
	% \end{align} 
	% From the monotonicity, we have for every $\delta > 0$
	% \begin{align}
	% 	\lim_{q \to -\lambda_{0}+0} \int_{0}^{\infty}\mathrm{e}^{-q t} \bP_{x}[\tau^{+}_{x_{0} + \delta} \wedge \tau^{-}_{0} > t]dt = \infty. \label{eq10}
	% \end{align}
	% Set $\delta_{0} := \sup\{ \delta \geq 0 \mid \inf_{\delta' \in [0,\delta]} \int_{0}^{x_{0} + \delta'}W^{(-\lambda_{0})}(y)dy > 0 \}$. Note that $\delta_{0} > 0$ from the positivity of $W^{(-\lambda_{0})}(x)$ on $(0,x_{0})$.
	% Then from \eqref{eq09} for $x_{0}$ being $x_{0} + \delta \ (\delta \in (0,\delta_{0}))$ and \eqref{eq10}, we have $W^{(-\lambda_{0})}(x_{0} + \delta) = 0$.
	% Thus, it holds $\delta_{0} = \infty$ and $W(x) = 0 \ (x \geq x_{0})$.
	% On the other hand, from \eqref{charEq} we have
	% \begin{align}
	% 	W(x_{0} + \delta) &= \lambda_{0} \int_{0}^{x_{0}} W(x_{0} + \delta - y) W^{(-\lambda_{0})}(y)dy \label{}
	% \end{align}

\end{proof}

If the function $W^{(-\lambda)}$ is non-negative on $(0,\infty)$, it induces a quasi-stationary distribution.

\begin{Lem} \label{lem:scaleFuncInduceQSD}
	Let $\lambda > 0$ and assume the function $W^{(-\lambda)}$ is non-negative on $(0,\infty)$.
	Then the distribution
	\begin{align}
		\nu_{\lambda}(dx) := \lambda W^{(-\lambda)}(x)dx \label{}
	\end{align}
	is a quasi-stationary distribution with $\bP_{\nu_{\lambda}}[\tau^{-}_{0} > t] = \mathrm{e}^{-\lambda t} \ (t > 0)$.
\end{Lem}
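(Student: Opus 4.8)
By hypothesis $W^{(-\lambda)} \geq 0$ on $(0,\infty)$, so Lemma \ref{lem:non-negativeImpliyIntegrable} shows that $W^{(-\lambda)}$ is integrable with $\lambda\int_{0}^{\infty}W^{(-\lambda)}(y)dy = 1$; hence $\nu_{\lambda}(dx) = \lambda W^{(-\lambda)}(x)dx$ is a probability distribution on $[0,\infty)$. The plan is to prove the single \emph{eigen-relation}
\[ \bP_{\nu_{\lambda}}[X_{t} \in dy, \tau^{-}_{0} > t] = \mathrm{e}^{-\lambda t}\nu_{\lambda}(dy) \quad (t > 0), \]
from which both assertions follow immediately: integrating over $y \in [0,\infty)$ and using $\nu_{\lambda}([0,\infty)) = 1$ gives $\bP_{\nu_{\lambda}}[\tau^{-}_{0} > t] = \mathrm{e}^{-\lambda t}$, and dividing then yields $\bP_{\nu_{\lambda}}[X_{t} \in dy \mid \tau^{-}_{0} > t] = \nu_{\lambda}(dy)$, which is exactly \eqref{eq25}.

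\textbf{Passing to the Laplace transform in $t$.}
I would establish the eigen-relation by taking its Laplace transform in $t$ and testing against a bounded continuous $h \geq 0$. For $q > 0$ the killed $q$-resolvent has density $u^{(q)}(x,y) = \mathrm{e}^{-x\Phi(q)}W^{(q)}(y) - W^{(q)}(y-x)$: this is Corollary \ref{cor:potentialDensity}, and for $q > 0$ it follows from the two-sided formula \eqref{potentialDensityTwoSide} together with Corollary \ref{cor:ratioOfScale} by letting $a \to \infty$, so the standing hypothesis $\bP_{x}[\tau^{-}_{0} < \infty] = 1$ is not needed. Since all integrands are non-negative, Tonelli's theorem permits every interchange, giving for $q > \lambda$
\[ \int_{0}^{\infty}\mathrm{e}^{-qt}\,\bE_{\nu_{\lambda}}[h(X_{t}), \tau^{-}_{0} > t]\,dt = \lambda\int_{0}^{\infty}h(y)\left(\int_{0}^{\infty}W^{(-\lambda)}(x)\,u^{(q)}(x,y)\,dx\right)dy. \]
It therefore suffices to compute the inner integral $I(y) := \int_{0}^{\infty}W^{(-\lambda)}(x)\,u^{(q)}(x,y)\,dx$.

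\textbf{The key computation.}
Splitting $u^{(q)}$ into its two terms, the first contributes $W^{(q)}(y)\int_{0}^{\infty}\mathrm{e}^{-x\Phi(q)}W^{(-\lambda)}(x)dx$; applying the Laplace-transform formula \eqref{eq05} to $W^{(-\lambda)}$ at $\beta = \Phi(q) > \Phi(\lambda)$ (legitimate as $q > \lambda$) and using $\psi(\Phi(q)) = q$, this integral equals $1/(\psi(\Phi(q)) + \lambda) = 1/(q+\lambda)$. The second term is the convolution $(W^{(-\lambda)} \ast W^{(q)})(y) = \int_{0}^{y}W^{(-\lambda)}(x)W^{(q)}(y-x)dx$, which I would identify directly from the characteristic equation \eqref{charEq}: taking there the solution $f = W^{(q)}$ with base parameter $r = -\lambda$ gives $W^{(q)} = W^{(-\lambda)} + (q+\lambda)\,W^{(-\lambda)} \ast W^{(q)}$, hence $(W^{(-\lambda)} \ast W^{(q)})(y) = (W^{(q)}(y) - W^{(-\lambda)}(y))/(q+\lambda)$. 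Combining, the $W^{(q)}(y)$ contributions cancel and $I(y) = W^{(-\lambda)}(y)/(q+\lambda)$.

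\textbf{Inversion and the anticipated obstacle.}
Since $\int_{0}^{\infty}\mathrm{e}^{-qt}\mathrm{e}^{-\lambda t}dt = 1/(q+\lambda)$, the previous two displays say that for every bounded continuous $h \geq 0$ the functions $t \mapsto \bE_{\nu_{\lambda}}[h(X_{t}), \tau^{-}_{0} > t]$ and $t \mapsto \mathrm{e}^{-\lambda t}\int h\,d\nu_{\lambda}$ have the same Laplace transform on $(\lambda,\infty)$; both are continuous in $t$ (the former by right-continuity of the paths of $X$ and dominated convergence), so by uniqueness of the Laplace transform they coincide for all $t > 0$. Letting $h$ range over a measure-determining class yields the eigen-relation, completing the proof. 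I expect the only genuine subtleties to be bookkeeping: justifying the $q$-potential density for $q > 0$ without the hypothesis $\bP_{x}[\tau^{-}_{0} < \infty] = 1$, and the passage from equality of Laplace transforms to equality of the measures $\bP_{\nu_{\lambda}}[X_{t} \in \cdot, \tau^{-}_{0} > t]$. The algebraic heart—using \eqref{charEq} to evaluate $W^{(-\lambda)} \ast W^{(q)}$ so that the two pieces of $u^{(q)}$ collapse to $W^{(-\lambda)}(y)/(q+\lambda)$—is short, and the non-negativity hypothesis enters only through Lemma \ref{lem:non-negativeImpliyIntegrable} to make $\nu_{\lambda}$ a probability measure.
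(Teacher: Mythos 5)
Your proof is correct and follows essentially the same route as the paper: Laplace-transform the killed semigroup in $t$, insert the potential density $u^{(q)}$ of Corollary \ref{cor:potentialDensity}, evaluate the first term via \eqref{eq05} at $\beta = \Phi(q)$ and the convolution term via \eqref{charEq}, and invert. The only difference is that you make explicit some bookkeeping the paper leaves implicit (restricting to $q > \lambda$ so that \eqref{eq05} applies to $W^{(-\lambda)}$, justifying $u^{(q)}$ for $q>0$ without the hypothesis $\bP_{x}[\tau^{-}_{0}<\infty]=1$, and the uniqueness argument for the Laplace inversion).
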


\begin{proof}
	Consider the Laplace transform of $\bP_{\nu_{\lambda}}[X_{t} \in dx, \tau^{-}_{0} > t]$.
	For $q > \lambda$, it holds from Corollary \ref{cor:potentialDensity}, \eqref{eq05} and \eqref{charEq}
	\begin{align}
		& \int_{0}^{\infty}\mathrm{e}^{- qt} \bP_{\nu_{\lambda}}[X_{t} \in dy, \tau^{-}_{0} > t]dt \label{} \\
		=&\lambda \left(\int_{0}^{\infty}W^{(-\lambda)}(x)u^{(q)}(x,y)dx\right) dy \label{} \\
		=&\lambda \left(\int_{0}^{\infty}W^{(-\lambda)}(x)(\mathrm{e}^{- x\Phi(q)}W^{(q)}(y) - W^{(q)}(y-x))dx\right)dy \label{} \\
		=& \left(\frac{\lambda W^{(q)}(y)}{\psi(\Phi(q)) + \lambda} - \lambda W^{(q)} \ast W^{(-\lambda)}(y)\right)dy \label{} \\
		=&\frac{\lambda}{q + \lambda} (W^{(q)}(y) - (q+\lambda) W^{(q)} \ast W^{(-\lambda)}(y))dy \label{} \\
		=& \frac{\nu_{\lambda}(dy)}{q+\lambda}. \label{}
	\end{align}
	Therefore, we obtain
	\begin{align}
		\bP_{\nu_{\lambda}}[X_{t} \in dx, \tau^{-}_{0} > t]dt = \mathrm{e}^{-\lambda t}\nu_{\lambda}(dx)dt, \label{}
	\end{align}
	which shows that $\nu_{\lambda}$ is a quasi-stationary distribution.
\end{proof}

We prove Theorem \ref{thm:charOfQSD}.

\begin{proof}[Proof of Theorem \ref{thm:charOfQSD}]
	It is obvious that $\lambda_{0} > 0$ is necessary for existence of a quasi-stationary distribution.
	Let $\lambda_{0} > 0$.
	From Lemmas \ref{lem:non-negativityOfScale} and \ref{lem:scaleFuncInduceQSD}, the distributions $\{\nu_{\lambda}\}_{0 < \lambda \leq \lambda_{0}}$ are quasi-stationary distributions.
	If there is another quasi-stationary distribution $\nu$, from Lemma \ref{lem:charOfQSD1} the distribution $\nu$ have a density $\lambda W^{(-\lambda)}(x)$ for some $\lambda > \lambda_{0}$.
	It is, however, impossible because from Lemma \ref{lem:scaleFuncInduceQSD} it follows $\bE_{\nu}[\mathrm{e}^{(\lambda-\eps) \tau^{-}_{0}}] < \infty$ for every $\eps > 0$ and contradicts to the definition of $\lambda_{0}$.
\end{proof}

As a corollary of Theorem \ref{thm:charOfQSD}, we show the set of quasi-stationary distributions is totally ordered by the following two stochastic orders.
For a probability distribution $\mu$ and $\nu$ on $[0,\infty)$, we say $\mu$ is smaller than $\nu$ in the \textit{Laplace transform ratio order} and denote $\mu \leq_{\mathrm{Lt-r}} \nu$ when
\begin{align}
	\frac{\int_{0}^{\infty}\mathrm{e}^{-\beta x}\nu(dx)}{\int_{0}^{\infty}\mathrm{e}^{-\beta x}\mu(dx)} \quad \text{is non-increasing for } \beta \in [0,\infty), \label{}
\end{align}
and, we say $\mu$ is smaller than $\nu$ in the \textit{reversed Laplace transform ratio order} and denote $\mu \leq_{\mathrm{r-Lt-r}} \nu$ when
\begin{align}
	\frac{1 - \int_{0}^{\infty}\mathrm{e}^{-\beta x}\nu(dx)}{1 - \int_{0}^{\infty}\mathrm{e}^{-\beta x}\mu(dx)} \quad \text{is non-increasing for } \beta \in [0,\infty), \label{}
\end{align}
See e.g., \cite[Chapter 5.B]{StochasticOrder} for basic properties of these orders and relation to other stochastic orders.

\begin{Cor} \label{cor:QSDsOrdered}
	Let $\lambda_{0} > 0$. For $0 < \lambda < \lambda' \leq \lambda_{0}$, it holds
	\begin{align}
		\nu_{\lambda'} \leq_{\mathrm{Lt-r}} \nu_{\lambda} \quad \text{and} \quad \nu_{\lambda'} \leq_{\mathrm{r-Lt-r}} \nu_{\lambda}. \label{}
	\end{align}
	In particular, the probability $\nu_{\lambda_{0}}$ is the minimal element in both orders.
\end{Cor}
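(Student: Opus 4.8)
The plan is to reduce both assertions to a single explicit computation of the Laplace transform of $\nu_{\lambda}$. Under the standing hypothesis $\lambda_{0} > 0$, I would first record that $\psi$ is strictly increasing on $[0,\infty)$ with $\psi(0) = 0$ and $\Phi(0) = 0$. Indeed, were $\psi^{+}(0) \leq 0$, strict convexity of $\psi$ would place its minimizer at a point $\geq 0$, forcing $\theta_{0} = 0$ and hence $\lambda_{0} = -\psi(-\theta_{0}) = 0$ by Theorem \ref{thm:spectralBottom}, contradicting $\lambda_{0} > 0$; so $\psi^{+}(0) > 0$, the function $\psi$ is increasing on $[0,\infty)$, and $\Phi(0) = 0$. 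Next I would use Lemma \ref{lem:scaleFuncInduceQSD}, which gives $\bP_{\nu_{\lambda}}[\tau^{-}_{0} > t] = \mathrm{e}^{-\lambda t}$, so that $\tau^{-}_{0}$ is $\mathrm{Exp}(\lambda)$ under $\bP_{\nu_{\lambda}}$ and $\bE_{\nu_{\lambda}}[\mathrm{e}^{-q\tau^{-}_{0}}] = \lambda/(q+\lambda)$ for $q \geq 0$. Combining this with Theorem \ref{thm:LTofFHT} (integrated against $\nu_{\lambda}$), which yields $\bE_{\nu_{\lambda}}[\mathrm{e}^{-q\tau^{-}_{0}}] = \int_{0}^{\infty}\mathrm{e}^{-x\Phi(q)}\nu_{\lambda}(dx)$, and using that $\Phi$ is a continuous increasing bijection of $[0,\infty)$ with $\Phi(0)=0$, I substitute $q = \psi(\beta)$ (so $\Phi(q) = \beta$) to obtain, for every $\beta \geq 0$,
\begin{align}
	\int_{0}^{\infty}\mathrm{e}^{-\beta x}\nu_{\lambda}(dx) = \frac{\lambda}{\psi(\beta) + \lambda}.
\end{align}

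With this closed form, the rest is a short computation. For the Laplace transform ratio order I would form, for $0 < \lambda < \lambda' \leq \lambda_{0}$,
\begin{align}
	\frac{\int_{0}^{\infty}\mathrm{e}^{-\beta x}\nu_{\lambda}(dx)}{\int_{0}^{\infty}\mathrm{e}^{-\beta x}\nu_{\lambda'}(dx)} = \frac{\lambda}{\lambda'}\cdot\frac{\psi(\beta)+\lambda'}{\psi(\beta)+\lambda},
\end{align}
and for the reversed order, using $1 - \lambda/(\psi(\beta)+\lambda) = \psi(\beta)/(\psi(\beta)+\lambda)$,
\begin{align}
	\frac{1 - \int_{0}^{\infty}\mathrm{e}^{-\beta x}\nu_{\lambda}(dx)}{1 - \int_{0}^{\infty}\mathrm{e}^{-\beta x}\nu_{\lambda'}(dx)} = \frac{\psi(\beta)+\lambda'}{\psi(\beta)+\lambda} \quad (\beta > 0).
\end{align}
Both quotients are, up to the positive factor $\lambda/\lambda'$, equal to $h(\psi(\beta))$, where $h(u) := (u+\lambda')/(u+\lambda) = 1 + (\lambda'-\lambda)/(u+\lambda)$.

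It then remains to observe that $h$ is strictly decreasing on $[0,\infty)$ since $\lambda'-\lambda > 0$, while $u = \psi(\beta)$ is non-decreasing in $\beta$ on $[0,\infty)$ by the first step; hence $\beta \mapsto h(\psi(\beta))$ is non-increasing, which is precisely $\nu_{\lambda'} \leq_{\mathrm{Lt-r}} \nu_{\lambda}$ and $\nu_{\lambda'} \leq_{\mathrm{r-Lt-r}} \nu_{\lambda}$. For the reversed order the apparent $0/0$ at $\beta = 0$ is removed by the continuous extension $h(\psi(0)) = \lambda'/\lambda$, which is the supremal value and so respects monotonicity. Taking $\lambda' = \lambda_{0}$ then shows that $\nu_{\lambda_{0}}$ is the minimal element in both orders. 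The only genuinely delicate point is the first step: equation \eqref{eq05} supplies the Laplace transform only for $\beta > \Phi(\lambda)$, and the monotonicity of $\psi$ on $[0,\infty)$ forced by $\lambda_{0} > 0$ (equivalently $\Phi(0)=0$) is exactly what both extends the formula down to $\beta = 0$ and drives the monotonicity of the ratios; everything afterward is elementary.
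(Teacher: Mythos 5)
Your proof is correct and follows essentially the same route as the paper: both reduce the claim to the identity $\int_{0}^{\infty}\mathrm{e}^{-\beta x}\nu_{\lambda}(dx) = \lambda/(\psi(\beta)+\lambda)$ for all $\beta \geq 0$, form the same two ratios, and conclude from the monotonicity of $\beta \mapsto (\psi(\beta)+\lambda')/(\psi(\beta)+\lambda)$. The only differences are cosmetic: the paper obtains the Laplace-transform formula by analytically extending \eqref{eq05} to $\beta \geq 0$ using the positivity of $W^{(-\lambda)}$, rather than via the exit-time law and the substitution $q = \psi(\beta)$, and it verifies the monotonicity by differentiation rather than by composing a decreasing function with the increasing $\psi$.
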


\begin{proof}
	Since $W^{(-\lambda)} \ (\lambda \in (0,\lambda_{0}])$ is positive, we can analytically extend the equality \eqref{eq05} to $\beta \geq 0$.
	Then we have for $0 < \lambda < \lambda' \leq \lambda_{0}$,
	\begin{align}
		\frac{\int_{0}^{\infty}\mathrm{e}^{-\beta x}\nu_{\lambda}(dx)}{\int_{0}^{\infty}\mathrm{e}^{-\beta x}\nu_{\lambda'}(dx)} = \frac{\lambda}{\lambda'} \cdot \frac{\psi(\beta) + \lambda'}{\psi(\beta) + \lambda} \quad \text{and} \quad \frac{1 - \int_{0}^{\infty}\mathrm{e}^{-\beta x}\nu_{\lambda}(dx)}{1 - \int_{0}^{\infty}\mathrm{e}^{-\beta x}\nu_{\lambda'}(dx)} = \frac{\psi(\beta) + \lambda'}{\psi(\beta) + \lambda}. \label{}
	\end{align}
	Since it holds
	\begin{align}
		\frac{d}{d\beta}\left( \frac{\psi(\beta) + \lambda'}{\psi(\beta) + \lambda} \right)
		= \frac{\psi'(\beta)(\lambda - \lambda')}{(\psi(\beta) + \lambda)^{2}} < 0, \label{}
	\end{align}
	the proof is complete.
\end{proof}

We prove the first equality of \eqref{eq07} in Theorem \ref{thm:spectralBottom}.

\begin{proof}[Proof of Theorem \ref{thm:spectralBottom} (the first half)]
	Set
	\begin{align}
		\tilde{\lambda}_{0} := \sup \left\{ \lambda \geq 0 ~\middle|~ W^{(-\lambda)}(x) > 0 \quad \text{for every $x > 0$} \right\}. \label{} 
	\end{align}
	From Lemma \ref{lem:non-negativityOfScale}, we have $\lambda_{0} \leq \tilde{\lambda_{0}}$.
	For every $0 < \lambda < \tilde{\lambda_{0}}$, it holds from Lemma \ref{lem:scaleFuncInduceQSD} that $\nu_{\lambda}$ is a quasi-stationary distribution with $\bP_{\nu_{\lambda}}[\tau^{-}_{0} > t] = \mathrm{e}^{-\lambda t}$, which implies $\bE_{x}[\mathrm{e}^{(\lambda - \eps) \tau^{-}_{0}}] < \infty$ for every $\eps > 0$ and $x > 0$.
	Thus, it follows $\lambda \leq \lambda_{0}$.
\end{proof}

To show the second equality of \eqref{eq07}, we need some more preparation.
Since the function $\Phi$ satisfies \eqref{eq12}, we may extend the domain of $\Phi$ to $(-\lambda_{0},\infty)$ by defining
\begin{align}
	\Phi(q) := -\log \bE_{1}[\mathrm{e}^{-q\tau^{-}_{0}}, \tau_{0}^{-} < \infty]. \label{eq23}
\end{align}
Note that $\Phi$ is increasing and analytic on the interval.

When $\lambda_{0} > 0$, the exponential moments of the process $X$ exist.

\begin{Prop} \label{prop:ExponentialIntegrabilityOfFTImpliesFinitenessOfMGF}
	Suppose $\lambda_{0} > 0$.
	Then for $0 <  \theta < - \Phi(-\lambda_{0}+) := -\lim_{\lambda \to \lambda_{0}-} \Phi(-\lambda)$, it holds $|\psi(-\theta)| < \infty$.
\end{Prop}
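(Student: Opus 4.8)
The statement asks that $\psi(-\theta)<\infty$ for every $0<\theta<-\Phi(-\lambda_{0}+)$; the lower bound $\psi(-\theta)>-\infty$ is automatic, since $\bE[\mathrm{e}^{\theta X_{1}}]\in(0,\infty]$. Writing $\phi(\lambda):=-\Phi(-\lambda)=\log\bE_{1}[\mathrm{e}^{\lambda\tau^{-}_{0}}]$, the hypothesis reads $\theta<\phi(\lambda_{0}-)$. The plan is to fix $\lambda\in(0,\lambda_{0})$ with $\phi(\lambda)>\theta$ (possible because $\phi(\lambda)\uparrow\phi(\lambda_{0}-)$ as $\lambda\uparrow\lambda_{0}$) and to show that $-\theta$ lies on the increasing branch of the convex function $\psi$, on which $\psi(-\theta)\leq\psi(0)=0$. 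The relation $\psi(\Phi(-\lambda))=-\lambda$ that would yield this at once is, for $X$ itself, circular: it presupposes the very finiteness of $\psi$ at the negative point $\Phi(-\lambda)$ that we are trying to establish. I would therefore run the convexity argument on truncated processes, where the exponent is finite on all of $\bR$, and then pass to the limit.

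Concretely, let $X^{(n)}$ be $X$ with all jumps exceeding $n$ removed, i.e. with L\'evy measure $\Pi(\,\cdot\,\cap(0,n])$ and the same $a,\sigma$; then its Laplace exponent $\psi_{n}$ is finite on all of $\bR$, and $\psi_{n}(-\theta)\uparrow\psi(-\theta)$ by monotone convergence in the L\'evy--Khintchine integral. Coupling $X^{(n)}\leq X$ pathwise gives $\tau^{-,(n)}_{0}\uparrow\tau^{-}_{0}$, whence $\bE_{x}[\mathrm{e}^{\lambda\tau^{-,(n)}_{0}}]\uparrow\bE_{x}[\mathrm{e}^{\lambda\tau^{-}_{0}}]$, so that $\phi_{n}(\lambda)\uparrow\phi(\lambda)$ and $\lambda^{(n)}_{0}\geq\lambda_{0}$. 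Since $\psi_{n}$ is finite everywhere, the classical fluctuation theory (the extension in \eqref{eq23} applied to $X^{(n)}$) holds unconditionally: for large $n$, $\psi_{n}$ is strictly convex with a unique minimum at $-\theta^{(n)}_{0}$, $\lambda^{(n)}_{0}=-\psi_{n}(-\theta^{(n)}_{0})$, and the analytic continuation of the right inverse satisfies $\Phi_{n}(-\lambda)=-\phi_{n}(\lambda)\in(-\theta^{(n)}_{0},0)$ with $\psi_{n}(-\phi_{n}(\lambda))=-\lambda$ for $0<\lambda<\lambda^{(n)}_{0}$.

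With $\lambda<\lambda_{0}\leq\lambda^{(n)}_{0}$ fixed as above, the right root $-\phi_{n}(\lambda)$ of $\psi_{n}=-\lambda$ lies strictly to the right of the minimum, i.e. $0<\phi_{n}(\lambda)<\theta^{(n)}_{0}$. As $\phi_{n}(\lambda)\to\phi(\lambda)>\theta$, for all large $n$ we have $\theta<\phi_{n}(\lambda)<\theta^{(n)}_{0}$, so $-\theta$ sits on the interval $(-\theta^{(n)}_{0},0)$ on which $\psi_{n}$ increases; hence $\psi_{n}(-\theta)\leq\psi_{n}(0)=0$. Letting $n\to\infty$ and using $\psi_{n}(-\theta)\uparrow\psi(-\theta)$ gives $\psi(-\theta)\leq 0<\infty$. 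Finiteness for every $\theta<\phi(\lambda_{0}-)=-\Phi(-\lambda_{0}+)$ then follows by letting $\lambda\uparrow\lambda_{0}$.

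The genuinely delicate point is the circularity noted above. One can show, via the two-sided identity \eqref{eq01} analytically continued to $q=-\lambda$ and a monotone‑convergence passage $a\to\infty$, that $W^{(-\lambda)}$ decays exactly at rate $\phi(\lambda)$, so that $\int_{0}^{\infty}\mathrm{e}^{\theta x}W^{(-\lambda)}(x)\,dx<\infty$ for $\theta<\phi(\lambda)$; but turning this integrability of the scale function into finiteness of $\psi(-\theta)$ stalls precisely at the boundary behaviour of $\psi$ at the edge of its domain, because the identity $\widehat{W^{(-\lambda)}}(\beta)=1/(\psi(\beta)+\lambda)$ continues only as far as $\psi$ is finite. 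Truncation sidesteps this entirely, since $\psi_{n}$ is finite on $\bR$ by construction and the convex picture is then available for free; the only thing I would verify carefully is that the admissibility windows do not collapse in the limit, which is exactly guaranteed by $\lambda^{(n)}_{0}\geq\lambda_{0}$ (keeping $\lambda$ admissible for every $n$) and by $\phi_{n}(\lambda)\to\phi(\lambda)>\theta$ (forcing $\theta<\theta^{(n)}_{0}$ eventually), together with the standard monotone coupling $\tau^{-,(n)}_{0}\uparrow\tau^{-}_{0}$ and the identification $\lambda^{(n)}_{0}=-\min\psi_{n}$.
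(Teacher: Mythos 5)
Your argument is correct, but it takes a genuinely different route from the paper. The paper stays ``internal'' to the process $X$: it fixes $\lambda\in(0,\lambda_0)$ with $\theta<-\Phi(-\lambda)$, analytically continues the potential density of Corollary \ref{cor:potentialDensity} to $q=-\lambda$, and uses Lemma \ref{lem:propOfWPhi} (monotonicity of $W_{\Phi(-\lambda)}$ and $W_{\Phi(-\lambda)}(\infty)=\Phi'(-\lambda)$) to let the killing barrier and the spatial cutoff go to infinity, obtaining $\int_0^\infty \mathrm{e}^{\lambda t}\bE[\mathrm{e}^{\theta X_t},X_t\ge0]\,dt=\int_0^\infty(\Phi'(-\lambda)\mathrm{e}^{\Phi(-\lambda)y}-W^{(-\lambda)}(y))\mathrm{e}^{\theta y}dy<\infty$, which forces $\bE[\mathrm{e}^{\theta X_1}]<\infty$. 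You instead approximate by jump-truncated processes $X^{(n)}$, for which $\psi_n$ is entire, locate $-\theta$ on the increasing branch of $\psi_n$ to get the uniform bound $\psi_n(-\theta)\le 0$, and pass to the limit via $\psi_n(-\theta)\uparrow\psi(-\theta)$; the couplings and monotone limits you list ($\tau^{-,(n)}_0\uparrow\tau^-_0$, $\phi_n(\lambda)\uparrow\phi(\lambda)$, $\lambda_0^{(n)}\ge\lambda_0$, and nonnegativity of the integrand $\mathrm{e}^{\theta x}-1-\theta x1\{x<1\}$) all check out, and your diagnosis of why the naive continuation of $\widehat{W^{(-\lambda)}}(\beta)=1/(\psi(\beta)+\lambda)$ stalls is exactly right. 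Your approach is more elementary in that it bypasses the scale-function and potential-density machinery entirely, and it even yields the sharper conclusion $\psi(-\theta)\le 0$; what it costs is the load-bearing appeal to ``classical fluctuation theory'' for $X^{(n)}$, namely that $\Phi_n$ continues to $(-\lambda_0^{(n)},\infty)$ with $\psi_n\circ\Phi_n=\mathrm{id}$ and $\Phi_n(-\lambda)>-\theta^{(n)}_0$, equivalently $\lambda^{(n)}_0=-\min\psi_n$. Within this paper's logical architecture that is precisely (the content of) \eqref{eq24} and Theorem \ref{thm:spectralBottom} for $X^{(n)}$, and Theorem \ref{thm:spectralBottom} is proved \emph{using} the present Proposition; your argument is nevertheless not circular, because for the truncated processes the Proposition is vacuous ($\psi_n$ is finite everywhere) and the identity theorem applied to $q\mapsto\psi_n(-\log\bE_1[\mathrm{e}^{-q\tau^{-,(n)}_0}])=q$ on $(-\lambda^{(n)}_0,\infty)$ delivers everything you need, with $\psi_n'(\Phi_n(q))=1/\Phi_n'(q)>0$ placing $\Phi_n(-\lambda)$ on the increasing branch. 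You should make that one step explicit (or cite it) rather than leaving it as ``holds unconditionally,'' since it is the only place where the proof could be accused of assuming what the Proposition is meant to help prove.
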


For the proof, we need the following lemma.
We owe the idea of the proof to \cite[Corollary 8.9]{KyprianouText}.

\begin{Lem} \label{lem:propOfWPhi}
	Suppose $\lambda_{0} > 0$.
	Then for $\lambda \in (0,\lambda_{0})$, the following hold:
	\begin{enumerate}
		\item The function $W_{\Phi(-\lambda)}(x) := \mathrm{e}^{-\Phi(-\lambda)x}W^{(-\lambda)}(x)$ is strictly increasing.
		\item For $r < \lambda$, it holds
		\begin{align}
			\int_{0}^{\infty}\mathrm{e}^{-\Phi(-r) x}W^{(-\lambda)}(x)dx = \frac{1}{\lambda-r}. \label{eq14}
		\end{align}
		\item $\lim_{x \to \infty} W_{\Phi(-\lambda)}(x) = \Phi^{+}(-\lambda)$.
	\end{enumerate}
\end{Lem}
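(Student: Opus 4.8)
The plan is to prove the three assertions in the order (ii), (i), (iii), treating (i) and (ii) as essentially independent and deducing (iii) from both by a final-value argument.

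For (ii) I would exploit the quasi-stationary distribution $\nu_{\lambda}$. Since $\lambda < \lambda_{0}$, Lemma \ref{lem:non-negativityOfScale} gives $W^{(-\lambda)} > 0$ on $(0,\infty)$, so by Lemma \ref{lem:scaleFuncInduceQSD} the measure $\nu_{\lambda}(dx) = \lambda W^{(-\lambda)}(x)dx$ is a quasi-stationary distribution with $\bP_{\nu_{\lambda}}[\tau^{-}_{0} > t] = \mathrm{e}^{-\lambda t}$. Hence $\tau^{-}_{0}$ is exponentially distributed with parameter $\lambda$ under $\bP_{\nu_{\lambda}}$, so $\bE_{\nu_{\lambda}}[\mathrm{e}^{r\tau^{-}_{0}}] = \lambda/(\lambda - r)$ for $r < \lambda$. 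On the other hand, the extension \eqref{eq23} of \eqref{eq12} gives $\bE_{x}[\mathrm{e}^{r\tau^{-}_{0}}] = \mathrm{e}^{-x\Phi(-r)}$ for $r < \lambda_{0}$, so Tonelli's theorem yields $\bE_{\nu_{\lambda}}[\mathrm{e}^{r\tau^{-}_{0}}] = \lambda \int_{0}^{\infty}\mathrm{e}^{-\Phi(-r)x}W^{(-\lambda)}(x)dx$. Equating the two expressions gives \eqref{eq14}. The only points to verify are that $r < \lambda < \lambda_{0}$ licenses \eqref{eq23}, and that all integrands are nonnegative so that Tonelli applies without any a priori integrability hypothesis.

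For (i) I would push the $q$-potential density of Corollary \ref{cor:potentialDensity} to $q = -\lambda$ by analytic continuation. For $\Re q > -\lambda_{0}$ the resolvent $U^{(q)}(x,dy) = \int_{0}^{\infty}\mathrm{e}^{-qt}\bP_{x}[X_{t} \in dy, \tau^{-}_{0} > t]dt$ converges absolutely, by the definition of $\lambda_{0}$, and is a nonnegative measure depending analytically on $q$; for $q \geq 0$ it has the density $\mathrm{e}^{-x\Phi(q)}W^{(q)}(y) - W^{(q)}(y-x)$. Since both the explicit formula and the resolvent are analytic in $q$ and agree for $q \geq 0$, the representation persists to $q = -\lambda$, whence $\mathrm{e}^{-x\Phi(-\lambda)}W^{(-\lambda)}(y) - W^{(-\lambda)}(y-x) \geq 0$ for $0 \leq x \leq y$. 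Taking $y = x_{2}$ and $x = x_{2} - x_{1}$ for $0 < x_{1} < x_{2}$ gives $\mathrm{e}^{-(x_{2}-x_{1})\Phi(-\lambda)}W^{(-\lambda)}(x_{2}) \geq W^{(-\lambda)}(x_{1})$, and multiplying by $\mathrm{e}^{-x_{1}\Phi(-\lambda)}$ turns this into $W_{\Phi(-\lambda)}(x_{2}) \geq W_{\Phi(-\lambda)}(x_{1})$. Thus $W_{\Phi(-\lambda)}$ is increasing. (I would deliberately avoid the Esscher-transform route here, since identifying $W_{\Phi(-\lambda)}$ with a $0$-scale function of a tilted process would require the finiteness of $\psi(\Phi(-\lambda))$, which is only established later in Proposition \ref{prop:ExponentialIntegrabilityOfFTImpliesFinitenessOfMGF} and must not be assumed.)

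For (iii) I would combine (i) and (ii). By (i) the function $W_{\Phi(-\lambda)}$ is nonnegative and increasing, so $L := \lim_{x \to \infty}W_{\Phi(-\lambda)}(x) \in (0,\infty]$ exists and the final-value theorem gives $s\int_{0}^{\infty}\mathrm{e}^{-sx}W_{\Phi(-\lambda)}(x)dx \to L$ as $s \downarrow 0$. Rewriting \eqref{eq14} via $W^{(-\lambda)}(x) = \mathrm{e}^{\Phi(-\lambda)x}W_{\Phi(-\lambda)}(x)$ and setting $s := \Phi(-r) - \Phi(-\lambda) > 0$ yields $\int_{0}^{\infty}\mathrm{e}^{-sx}W_{\Phi(-\lambda)}(x)dx = 1/(\lambda - r)$, so that $s\int_{0}^{\infty}\mathrm{e}^{-sx}W_{\Phi(-\lambda)}(x)dx = (\Phi(-r) - \Phi(-\lambda))/(\lambda - r)$. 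Letting $r \uparrow \lambda$, so $s \downarrow 0$, the right-hand side is the difference quotient of the analytic increasing function $\Phi$ at $-\lambda$ and converges to $\Phi'(-\lambda) < \infty$; hence $L = \Phi'(-\lambda)$. The main obstacle throughout is the analytic-continuation step in (i): one must argue that the continued formula genuinely represents the nonnegative resolvent density and not merely some analytic function agreeing with it on $[0,\infty)$. I would secure this by testing against an arbitrary Borel set $B$, noting that both $\int_{B}U^{(q)}(x,dy)$ and $\int_{B}(\mathrm{e}^{-x\Phi(q)}W^{(q)}(y) - W^{(q)}(y-x))dy$ are analytic in $q$ on $\{\Re q > -\lambda_{0}\}$ and coincide for $q \geq 0$, the absolute convergence being guaranteed by $\lambda < \lambda_{0}$.
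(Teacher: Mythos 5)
Your proof is correct, and two of its three parts track the paper's own argument closely, while part (ii) takes a genuinely different route. For (i), the paper also argues by analytic continuation of an exit identity to $q=-\lambda$: it combines the two-sided identity \eqref{eq01} with \eqref{eq12} to write $W_{\Phi(-\lambda)}(x)/W_{\Phi(-\lambda)}(x+y)$ as the ratio $\bE_{y}[\mathrm{e}^{\lambda\tau^{-}_{0}},\tau^{+}_{x+y}>\tau^{-}_{0}]\,/\,\bE_{y}[\mathrm{e}^{\lambda\tau^{-}_{0}}]\le 1$ (this is \eqref{eq15}), whereas you continue the one-sided potential density of Corollary \ref{cor:potentialDensity}; the substance is the same, and your care about why the continued formula still represents the nonnegative resolvent is exactly the point that needs attention. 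Part (iii) is verbatim the paper's final-value computation. The real divergence is (ii): the paper never invokes the quasi-stationary distribution there. Instead it observes that the limit of \eqref{eq15} as $x\to\infty$ makes $W_{\Phi(-\lambda)}(\log x)$ slowly varying, deduces $\mathrm{e}^{-\eps x}W_{\Phi(-\lambda)}(x)\to 0$ for every $\eps>0$ so that the integral in \eqref{eq14} is finite, and then extends the Laplace-transform identity \eqref{eq05} from $-r\in(\lambda,\infty)$ to all $r<\lambda$ by the identity theorem. Your route equates two expressions for $\bE_{\nu_{\lambda}}[\mathrm{e}^{r\tau^{-}_{0}}]$ using Lemmas \ref{lem:non-negativityOfScale} and \ref{lem:scaleFuncInduceQSD}; this is shorter, yields the finiteness of the integral for free via Tonelli, and is non-circular since those lemmas precede this one, but it makes a purely analytic fact about scale functions depend on the quasi-stationarity machinery. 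One small step to make explicit in your (ii): the identity $\bE_{x}[\mathrm{e}^{r\tau^{-}_{0}}]=\mathrm{e}^{-x\Phi(-r)}$ for all $x\ge 0$, rather than only $x=1$ as in the definition \eqref{eq23}, requires either the multiplicativity of $x\mapsto\bE_{x}[\mathrm{e}^{-q\tau^{-}_{0}}]$ (strong Markov plus absence of negative jumps) or a continuation in $q$ for each fixed $x$; the paper uses the same fact implicitly in \eqref{eq15}, so this is a remark, not a gap.
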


\begin{proof}
	(i) By the analytic extension of \eqref{eq12} and \eqref{eq01}, it holds for $x,y > 0$
	\begin{align}
		\frac{\mathrm{e}^{-\Phi(-\lambda)x}W^{(-\lambda)}(x)}{\mathrm{e}^{-\Phi(-\lambda)(x+y)}W^{(-\lambda)}(x+y)} = \frac{\bE_{y}[\mathrm{e}^{\lambda\tau^{-}_{0}}, \tau^{+}_{x+y} > \tau^{-}_{0}]}{\bE_{y}[\mathrm{e}^{\lambda\tau^{-}_{0}}, \tau_{0}^{-} < \infty]}, \label{eq15}
	\end{align}
	which is smaller than one.
	Thus, the function $\mathrm{e}^{-\Phi(-\lambda)x}W^{(-\lambda)}(x)$ is strictly increasing.

	(ii) Taking the limit in \eqref{eq15} as $x \to \infty$, it converges to $1$, which implies that the function $\mathrm{e}^{-\Phi(-\lambda)\log x}W^{(-\lambda)}(\log x) \ (x \geq 1)$ is slowly varying at $\infty$.  Then it follows for every $\eps > 0$ that $\lim_{x \to \infty}\mathrm{e}^{-\eps x}W_{\Phi(-\lambda)}(x) = 0$  (see e.g., \cite[Proposition 1.3.6]{Regularvariation}). Thus, the integral in the LHS of \eqref{eq14} is finite.
	Both sides of \eqref{eq14} are analytic on the interval $(-\lambda,\infty)$ as the functions of $-r$ and coincide for $-r \in (\lambda,\infty)$ by \eqref{eq05}.
	Hence, from the identity theorem the equality holds for $r < \lambda$.

	(iii) From (i) and (ii), we have
	\begin{align}
		W_{\Phi(-\lambda)}(\infty) &= \lim_{\beta \to 0+} \beta \int_{0}^{\infty} \mathrm{e}^{-\beta x}W_{\Phi(-\lambda)}(x)dx \label{} \\
		&= \lim_{r \to \lambda-}(\Phi(-r) - \Phi(-\lambda))\int_{0}^{\infty}\mathrm{e}^{-\Phi(-r) x}W^{(-\lambda)}(x)dx \label{} \\
		&= \lim_{r \to \lambda-}\frac{\Phi(-r) - \Phi(-\lambda)}{\lambda-r} \label{} \\
		&= \Phi^{+}(-\lambda). \label{}
	\end{align}
\end{proof}

\begin{Rem}
	As we will see in Remark \ref{rem:extOfPhi}, it holds $\Phi(-\lambda_{0}) = -\bE_{1}[\mathrm{e}^{\lambda_{0}\tau^{-}_{0}}, \tau_{0}^{-} < \infty] > -\infty$.
	We may easily check that the results in Lemma \ref{lem:propOfWPhi} also hold for $\lambda = \lambda_{0}$ by the same argument.
\end{Rem}

\begin{proof}[Proof of Proposition \ref{prop:ExponentialIntegrabilityOfFTImpliesFinitenessOfMGF}]
	Let $0 < \theta < -\Phi(-\lambda_{0}+)$. Take $\lambda \in (0,\lambda_{0})$ such that $0 < \theta < -\Phi(-\lambda)$.
	By the analytic extension of Corollary \ref{cor:potentialDensity}, Lemma \ref{lem:propOfWPhi} and the monotone convergence theorem, we have for $x,a > 0$
		\begin{align}
		&\int_{0}^{\infty} \mathrm{e}^{\lambda t} \bE[\mathrm{e}^{\theta X_{t}},0 \leq X_{t} \leq a, \tau^{-}_{-x} > t]dt \label{} \\
		= &\int_{0}^{a} (\mathrm{e}^{-\Phi(-\lambda)x}W^{(-\lambda)}(y+x) - W^{(-\lambda)}(y))\mathrm{e}^{\theta y}dy \label{} \\
		\xrightarrow{x \to \infty}& \int_{0}^{a} (\Phi'(-\lambda)\mathrm{e}^{\Phi(-\lambda)y} - W^{(-\lambda)}(y))\mathrm{e}^{\theta y}dy \label{} \\
		\xrightarrow{a \to \infty} &\int_{0}^{\infty} (\Phi'(-\lambda)\mathrm{e}^{\Phi(-\lambda)y} - W^{(-\lambda)}(y))\mathrm{e}^{\theta y}dy < \infty. \label{} 
	\end{align}
	Thus, we obtain
	\begin{align}
		\int_{0}^{\infty} \mathrm{e}^{\lambda t} \bE[\mathrm{e}^{\theta X_{t}}, X_{t} \geq 0]dt 
		=\int_{0}^{\infty} (\Phi'(-\lambda)\mathrm{e}^{\Phi(-\lambda)y} - W^{(-\lambda)}(y))\mathrm{e}^{\theta y}dy < \infty. \label{}
	\end{align}
\end{proof}

We prove the second equality of \eqref{eq07} in Theorem \ref{thm:spectralBottom}.

\begin{proof}[Proof of Theorem \ref{thm:spectralBottom} (the latter half)]
	At first, we show 
	\begin{align}
		-\psi(-\theta_{0}) \leq \lambda_{0}. \label{eq30}
	\end{align}	
	Since $\bE[\mathrm{e}^{\theta_{0} X_{1}}] < \infty$, we may consider the Esscher transform $\bP^{\theta_{0}}$ defined in \eqref{eq16}.
	Note that $\psi_{\theta_{0}}^{+}(0) = \psi^{+}(-\theta_{0}) \geq 0$.
	We denote the $q$-scale function of the transformed process by $W^{(q)}_{\theta_{0}}$.
	From \cite[Lemma 8.4]{KyprianouText}, we have
	\begin{align}
		W^{(\psi(-\theta_{0}))}(x) = \mathrm{e}^{-\theta_{0} x}W_{\theta_{0}}^{(0)}(x), \label{}
	\end{align}
	from which we see the function $W^{(\psi(-\theta_{0}))}$ is positive on $(0,\infty)$.
	Hence, from the first equality of \eqref{eq07} we obtain \eqref{eq30}.

	When $\theta_{0} > 0$, it holds $-\psi(-\theta_{0}) > 0$ from the definition of $\theta_{0}$. 
	Thus, it follows $\lambda_{0} > 0$ from \eqref{eq30}.
	When $\lambda_{0} > 0$, it follows from \eqref{eq31} that $\psi^{+}(0) = 1 / \Phi(0)' > 0$.
	Thus, from Proposition \ref{prop:ExponentialIntegrabilityOfFTImpliesFinitenessOfMGF}
	we have $\theta_{0} > 0$.
	Therefore, we obtain the equivalence $\lambda_{0} > 0 \ \Leftrightarrow \ \theta_{0} > 0$
	and the desired equality when $\theta_{0} = \lambda_{0} = 0$.

	To complete the proof, it is enough to show $-\psi(-\theta_{0}) \geq \lambda_{0}$ when $\theta_{0} > 0$.
	For $0 < \theta < \theta_{0}$, it is not difficult to see that the right inverse $\Phi_{\theta}$ of the Esscher transformed $\bP^{\theta}$ satisfies
	\begin{align}
		\Phi_{\theta}(q) = \Phi(q + \psi(-\theta)) + \theta \quad (q \geq 0). \label{}
	\end{align}
	Since $\psi_{\theta}^{+}(0) = \psi'(\theta) > 0$,
	we have $0 = \Phi_{\theta}(0) = \Phi(\psi(-\theta)) + \theta$.
	Thus, it holds
	\begin{align}
		-\theta = \Phi(\psi(-\theta)) \quad (0 < \theta \leq \theta_{0}). \label{eq24}
	\end{align}
	Suppose $-\psi(-\theta_{0}) < \lambda_{0}$.
	Then it follows $\theta_{0} < -\Phi(-\lambda_{0}+)$ from \eqref{eq24}.
	From Proposition \ref{prop:ExponentialIntegrabilityOfFTImpliesFinitenessOfMGF},
	for $\theta_{0} < \tilde{\theta} < -\Phi(-\lambda_{0}+)$ it holds $|\psi(-\tilde{\theta})| < \infty$.
	By considering the Esscher transform $\bP^{\tilde{\theta}}$,
	from the definition of $\theta_{0}$ we see $\psi'(-\tilde{\theta}) < 0$,
	and it implies $\psi'(-\theta_{0}) = 0$.
	Again from \eqref{eq24} we have $\Phi'(\psi(-\theta)) = 1/ \psi'(-\theta)$ for $\theta < \theta_{0}$, and therefore $\lim_{\theta \to \theta_{0}-}\Phi'(\psi(-\theta)) = \infty$.
	Since the function $\Phi$ is defined on the interval $(-\lambda_{0},\infty)$ and strictly concave,
	it follows $-\lambda_{0} \geq \psi(-\theta_{0})$, which contradicts to the assumption $-\psi(-\theta_{0}) < \lambda_{0}$. Hence, we obtain $\psi(-\theta_{0}) = -\lambda_{0}$.
	% Since the Laplace exponent $\psi(\beta)$ is finite and increasing on $(-\Phi(-\lambda_{0}+0),\infty)$ from Proposition \ref{prop:ExponentialIntegrabilityOfFTImpliesFinitenessOfMGF},
	% it follows $\theta_{0} \geq -\Phi(-\lambda_{0}+0)$.
	% Take $\lambda < \lambda_{0}$ arbitrarily, and take $\beta > -\theta_{0}$ so that $-\beta \geq -\Phi(-\lambda)$.
	% It is enough to show $\psi(\Phi(-\lambda)) = -\lambda$.
	% Consider the Esscher transform for $\theta = -\Phi(-\lambda)$.
	% It is not difficult to see the right inverse $\Phi_{\theta}$ of the transformed process satisfies
	% \begin{align}
	% 	\Phi_{\theta}(q) = \Phi(q + \psi(-\theta)) + \theta \quad (q \geq 0). \label{}
	% \end{align}
	% Since $\psi_{\theta}'(0+) = \psi'(\theta) > 0$,
	% we have $0 = \Phi_{\theta}(0) = \Phi(\psi(-\theta)) + \theta$, that is, $-\theta = \Phi(\psi(-\theta))$.
	% Thus, we have $\Phi(-\lambda) = \Phi(\psi(\Phi(-\lambda)))$.
	% Since the function $\Phi$ is increasing on $(-\lambda_{0},\infty)$, it holds $-\lambda = \psi(\Phi(-\lambda))$.
\end{proof}

\begin{Rem} \label{rem:extOfPhi}
	From \eqref{eq24} and the monotone convergence theorem, we see that $\Phi(-\lambda_{0}) = -\theta_{0} > -\infty$, that is, $\bE_{1}[\mathrm{e}^{\lambda_{0}\tau^{-}_{0}}, \tau_{0}^{-} < \infty] = \mathrm{e}^{\theta_{0}} < \infty$.
	Thus, the process $X$ is \textit{$\lambda_{0}$-transient} in the terminology of Tuominen and Tweedie \cite{TuominenTweedie}.
\end{Rem}

We prove Corollary \ref{cor:YaglomLimit}.
For the definition of \textit{class A} and \textit{class B}, see \cite[Definition 1,2]{KyprianouPalmowski}.

\begin{proof}[Proof of Corollary \ref{cor:YaglomLimit}]
	Under the assumption (i) or (ii), the process $X$ belongs to class A or class B, respectively.
	From \cite[Theorem 1]{KyprianouPalmowski}, the convergence \eqref{eq28} holds for every one-point initial distribution on $(0,\infty)$.
	Since $\bE_{x}[\mathrm{e}^{\theta X_{t}}, \tau^{-}_{0} > t] \ (\theta \in [0,\theta_{0}), \ t > 0)$ is increasing in $x > 0$, we see for every $0 < r \leq x \leq R$ that
	\begin{align}
		\bE_{x}[\mathrm{e}^{\theta X_{t}} \mid \tau^{-}_{0} > t] 
		\leq \frac{\bP_{R}[\tau^{-}_{0} > t]}{\bP_{r}[\tau^{-}_{0} > t]} \cdot \bE_{R}[\mathrm{e}^{\theta X_{t}} \mid \tau^{-}_{0} > t]. \label{}
	\end{align}
	Since the RHS is bounded in $t \geq 0$ from \cite[Lemma 3]{KyprianouPalmowski},
	we obtain the desired result from the dominated convergence theorem.
\end{proof}

\section{Examples} \label{section:example}

Here we see some examples such that we can give $\lambda_{0}$ in \eqref{eq08}, $\theta_{0}$ in \eqref{thetaZero} and the scale function $W^{(q)}$ explicitly to some extent.
From these, we can compute the set of quasi-stationary distributions by the relation $\nu_{\lambda}(dx) = \lambda W^{(-\lambda)}(x)dx \ (\lambda \in (0,\lambda_{0}])$.

\subsection*{Brownian motion with drift}

Let $X_{t} := -\mu t + \sigma B_{t}$, where $\mu > 0$ and $B$ is a standard Brownian motion.
Existence and domain of attraction of quasi-stationary distributions of this process have already been studied in \cite{DoAofBM}.
Its Laplace exponent and right inverse are
\begin{align}
	\psi(\beta) = \mu \beta + \frac{1}{2}\sigma^{2} \beta^{2} \quad \text{and} \quad \Phi(q) = \frac{1}{\sigma^{2}}(\sqrt{\mu^{2} + 2q\sigma^{2}} - \mu).
\end{align}
We easily see $\theta_{0} = \mu/ \sigma^{2}$ and $\lambda_{0} = -\psi(-\theta_{0}) = \mu^{2} / (2\sigma^{2})$.
It holds
\begin{align}
	\frac{1}{\psi(\beta) - q} = \frac{1}{\sqrt{\mu^{2} + 2q\sigma^{2}}} \left( \frac{1}{\beta - \Phi(q)} - \frac{1}{\beta - \alpha(q)} \right), \label{eq20}
\end{align}
where $\alpha(q)$ is the smaller solution of the quadratic equation $\psi(\beta) - q = 0$:
\begin{align}
	\alpha(q) = \frac{- \mu - \sqrt{\mu^{2} + 2q \sigma^{2}}}{\sigma^{2}}. \label{}
\end{align}
Thus, from \eqref{eq20} we have the scale functions 
\begin{align}
	W^{(q)}(x) 
	= &\left\{
	\begin{aligned}
		&\frac{2\mathrm{e}^{-\mu x / \sigma^{2}}}{\sqrt{\mu^{2} + 2q\sigma^{2} }} \sinh (x \sqrt{\mu^{2} + 2q\sigma^{2}} / \sigma^{2}) & (q \neq - \mu^{2}/(2\sigma^{2})) \\
		&(2x / \sigma^{2})\mathrm{e}^{-\mu x / \sigma^{2}} & (q = - \mu^{2}/(2\sigma^{2}) )
	\end{aligned}
	\right.
	. \label{}
\end{align}

% The quasi-stationary distributions are
% \begin{align}
% 	\nu_{\lambda}(dx) = \left\{
% 	\begin{aligned}
% 		& \frac{2 \lambda \mathrm{e}^{-\mu x/ \sigma^{2}}}{\sqrt{\mu^{2} - 2\lambda \sigma^{2} }} \sinh (x \sqrt{\mu^{2} - 2 \lambda \sigma^{2}} / \sigma^{2}) dx & (0  < \lambda < \mu^{2} / (2\sigma^{2})), \\
% 		& \frac{\mu^{2}x}{\sigma^{4}} \mathrm{e}^{-\mu x / \sigma^{2}}dx  & (\lambda = \mu^{2} / (2\sigma^{2})).
% 	\end{aligned}
% 	\right.
% 	\label{}
% \end{align}

\subsection*{Compound Poisson process with exponentially distributed jumps and drift}

Let $X_{t}$ be a spectrally positive L\'evy process with the Laplace exponent
\begin{align}
	\psi(\beta) = \mu \beta - c\int_{0}^{\infty}(1 - \mathrm{e}^{-\beta x})\rho \mathrm{e}^{-\rho x}dx = \mu \beta - \frac{c\beta}{\beta + \rho} \label{}
\end{align}
for $\mu,c,\rho > 0$, that is, the process $X$ is a compound Poisson process with the drift $-\mu$ and the L\'evy measure $\Pi(dx) = c\rho \mathrm{e}^{-\rho x}1\{x > 0\}dx$.
To ensure existence of quasi-stationary distributions, we suppose $\psi'(0) = \mu - c / \rho > 0$.
We easily see $\theta_{0} = \rho - \sqrt{c\rho / \mu}$ and $\lambda_{0} = (\sqrt{\mu\rho} - \sqrt{c})^{2}$.
The right inverse of $\psi$ is
\begin{align}
	\Phi(q) = \frac{-(\mu \rho - c - q) + \sqrt{D(q)}}{2\mu} \quad (q \geq 0), \label{}
\end{align}
for $D(q) := (\mu \rho - c - q)^{2} + 4\mu \rho q$.
For $q \in \bC$ and $\beta > \Phi(|q|)$, it holds
\begin{align}
	\frac{1}{\psi(\beta) - q} &= \frac{\beta + \rho}{\mu \beta^{2} + (\mu \rho - c - q)\beta - q\rho} \label{} \\
	&= \frac{\beta + \rho}{\mu(\beta - \Phi(q))(\beta - \alpha(q))} \label{} \\
	&= \frac{1}{\sqrt{D(q)}} \left( \frac{\Phi(q) + \rho}{\beta - \Phi(q)} - \frac{\alpha(q) + \rho}{\beta - \alpha(q)}\right) \label{} \\
	&= \frac{1}{\sqrt{D(q)}} \int_{0}^{\infty} \mathrm{e}^{-\beta x}( (\Phi(q) + \rho)\mathrm{e}^{ \Phi(q) x } - (\alpha(q) + \rho)\mathrm{e}^{ \alpha(q) x}) dx \label{eq21}
\end{align}
for
\begin{align}
	\alpha(q) := \frac{-(\mu \rho - c - q) - \sqrt{D(q)}}{2\mu}, \label{}
\end{align}
where we consider the limit as $q \to q_{0}$ when $D(q_{0}) = 0$.
Set $\gamma(q) := \mu\rho - c - q$.
Since $D(q) = 0 \Leftrightarrow q = - (\sqrt{\mu\rho} \pm \sqrt{c})^{2}$,
from \eqref{eq21} the scale functions are for $q \neq - (\sqrt{\mu\rho} \pm \sqrt{c})^{2}$
\begin{align}
	W^{(q)}(x)
	&= \frac{ (\Phi(q) + \rho)\mathrm{e}^{ \Phi(q) x } - (\alpha(q) + \rho)\mathrm{e}^{ \alpha(q) x}}{\sqrt{D(q)}} \label{} \\
	&= \mathrm{e}^{-\frac{\gamma(q)}{2\mu}x} \left( \frac{-\gamma(q) / \mu + 2\rho}{\sqrt{D(q)}} \sinh \left( \frac{\sqrt{D(q)}}{2\mu}x \right) + \frac{1}{\mu} \cosh \left( \frac{\sqrt{D(q)}}{2\mu} x \right) \right) \label{}
\end{align}
and for $q = - (\sqrt{\mu\rho} \pm \sqrt{c})^{2}$
\begin{align}
	W^{(q)}(x) = \mathrm{e}^{-\rho(1 \pm \sqrt{c/(\mu\rho)})x} \left( \mp\frac{\rho}{\mu}\sqrt{\frac{c}{\mu\rho}}x + \frac{1}{\mu} \right). \label{}
\end{align}

\subsection*{Meromorphic L\'evy process}

We start from considering the spectrally positive L\'evy process whose L\'evy measure $\Pi$ has a completely monotone density, that is, suppose it holds for a Radon measure $\alpha$ on $(0,\infty)$
\begin{align}
	\int_{0}^{\infty}\mathrm{e}^{-sx}\alpha(ds) < \infty  \quad \text{and} \quad 
	\Pi(dx) = \left(\int_{0}^{\infty}\mathrm{e}^{-sx}\alpha(ds)\right)dx \quad (x > 0). \label{}
\end{align}
It is not difficult to see that for $\Pi$ to be a L\'evy measure it is necessary and sufficient that $\int_{0}^{\infty}s^{-1}(s^{2} + 1)^{-1} \alpha(dx) < \infty$. 
We also easily see the integrability $\int_{1}^{\infty}\mathrm{e}^{\delta x}\Pi(dx) < \infty$ for some $\delta > 0$ is equivalent to the infimum $r$ of the support of $\alpha$ is positive, that is,
there exists $r > 0$ such that $\alpha(0,r) = 0$ and $\alpha(0,r+\eps) > 0$ for every $\eps > 0$.
Then the Laplace exponent is given by
\begin{align}
	\psi(\beta) = -a \beta + \frac{1}{2}\sigma^{2}\beta^{2} 
	+ \int_{r}^{\infty}\left( \frac{1}{\beta + s} - \frac{1}{s} + \frac{\beta}{s^{2}} \right)\nu(ds) - \beta \int_{r}^{\infty}\frac{\mathrm{e}^{-s}(s+1)}{s^{2}}\alpha(ds) \label{}
\end{align}
for a constant $a \in \bR$ and a Gaussian coefficient $\sigma \geq 0$.
Its derivative is
\begin{align}
	\psi'(\beta) = -a + \sigma^{2}\beta - \int_{r}^{\infty}\left( \frac{1}{(\beta + s)^{2}} - \frac{1}{s^{2}} \right) \alpha(ds) - \int_{r}^{\infty}\frac{\mathrm{e}^{-s}(s+1)}{s^{2}}\alpha(ds). \label{}
\end{align}
We assume
\begin{align}
	\psi'(0) = -a - \int_{r}^{\infty}\frac{\mathrm{e}^{-s}(s+1)}{s^{2}}\alpha(ds) > 0. \label{}
\end{align}
Obviously, it holds $|\psi(\beta)| < \infty$ for $\beta > -r$ and $\psi(-r+) = \infty$.
Since $\psi(\beta)$ is convex on $(-r,\infty)$, there exists a unique root of $\psi'(\beta)$ on $(-r,0)$, and it is the minus of $\theta_{0}$ and $\lambda_{0} = -\psi(-\theta_{0})$.

To compute the scale function, we further assume the process $X$ is in the \textit{meromorphic class} (see e.g., \cite{MeromorphicLevy} and \cite[Section 6.5.4]{KyprianouText} for details), that is, the measure $\alpha$ is of the form
\begin{align}
	\alpha(ds) = \sum_{i \geq 1} a_{i}\rho_{i}\delta_{\rho_{i}}(ds) \label{eq17}
\end{align}
for $a_{i}, \rho_{i} > 0$, where the sequence $\{ \rho_{i} \}_{i \geq 1}$ is increasing and $\lim_{i \to \infty}\rho_{i} = \infty$. For $\Pi$ to be a L\'evy measure we suppose $\sum_{i \geq 1}a_{i}\rho_{i}^{-2} < \infty$.
By setting $\rho_{n} = \infty$ for some $n \geq 1$  in the argument below, we may treat the case when the range of summation in \eqref{eq17} is finite.

The Laplace exponent is
\begin{align}
	\psi(\beta) = -a \beta + \frac{1}{2}\sigma^{2}\beta^{2} 
	+ \sum_{i \geq 1}a_{i}\rho_{i}\mathrm{e}^{-\rho_{i}}\left( \frac{1}{\beta + \rho_{i}} - \frac{1}{\rho_{i}} + \frac{\beta}{\rho_{i}^{2}} \right) - \beta \sum_{i \geq 1}a_{i} \frac{\mathrm{e}^{-2\rho_{i}}(\rho_{i}+1)}{\rho_{i}}. \label{}
\end{align}
For $q > -\lambda_{0}$, the function $\psi(\beta) - q$ has simple poles at $\beta = -\rho_{i} \ (i \geq 1)$ and has simple roots at $\beta = \Phi(q)$ and $-\zeta_{i}(q) \ (i \geq 1)$, where $\zeta_{i}(q)$ is a positive number such that $\rho_{i} < \zeta_{i+1}(q) < \rho_{i+1}$ for $i \geq 0$, where we denote $\rho_{0} := \Phi(q)$.
Then by the factorization theorem, it holds for $\beta > \Phi(|q|)$
\begin{align}
	\int_{0}^{\infty}\mathrm{e}^{-\beta x} W^{(q)}(x)dx &= - \frac{1}{q}\cdot \frac{1}{1 - \beta / \Phi(q)} \prod_{i \geq 1}\frac{1 + \beta / \rho_{i}}{1 + \beta / \zeta_{i}(q)} \label{} \\
	&= \frac{1}{\psi'(\Phi(q))(\Phi(q) - \beta)} + \sum_{i \geq 1} \frac{1}{\psi'(-\zeta_{i}(q))(\zeta_{i}(q) + \beta)}. \label{}
\end{align}
Thus, we have
\begin{align}
	W^{(q)}(x) = \frac{\mathrm{e}^{\Phi(q)x}}{\psi'(\Phi(q))} + \sum_{i \geq 1}\frac{\mathrm{e}^{-\zeta_{i}(q)x}}{\psi'(-\zeta_{i}(q))}. \label{eq18}
\end{align}
Since $\psi$ is decreasing on $(-\rho_{1},-\theta_{0})$, it holds $\zeta_{1}(q) \to \theta_{0}$ as $q \to -\lambda_{0}+$.
Hence, in the case $q = -\lambda_{0}$ the order of the root $\beta = \Phi(-\lambda_{0})$ of $\psi(\beta) + \lambda_{0}$ is two.
The argument for $q > -\lambda_{0}$ also works by an obvious modification, and we obtain
\begin{align}
	W^{(-\lambda_{0})}(x) = \frac{2x\mathrm{e}^{\Phi(-\lambda_{0})x}}{\psi''(\Phi(-\lambda_{0}))} - \frac{2\psi'''(\Phi(-\lambda_{0})) \mathrm{e}^{\Phi(-\lambda_{0})x}}{3 \psi''(\Phi(-\lambda_{0}))^{2}} + \sum_{i \geq 2}\frac{\mathrm{e}^{-\zeta_{i}(-\lambda_{0})x}}{\psi'(-\zeta_{i}(-\lambda_{0}))}. \label{eq19}
\end{align}

\bibliography{arxiv02.bbl}
\bibliographystyle{plain}

\end{document}